\newcommand{\ROMAN}[1]{\uppercase\expandafter{\romannumeral#1}}
\newcommand{\rev}[1]{{\color{black}#1}}
\newcommand{\roundtwo}[1]{{\color{black}#1}}
\newtheorem{theorem}{Theorem}
\newtheorem{observation}[theorem]{Observation}
\newtheorem{remark}[theorem]{Remark}
\newtheorem{proposition}[theorem]{Proposition}
\newtheorem{property}[theorem]{Property}
\crefname{theorem}{Theorem}{Theorems}
\crefname{example}{Example}{Examples}
\crefname{observation}{Observation}{Observations}
\crefname{remark}{Remark}{Remarks}
\crefname{proposition}{Proposition}{Propositions}
\crefname{property}{Property}{Propertys}
\crefname{lemma}{Lemma}{Lemmas}
\crefname{corollary}{Corollary}{Corollaries}
\crefname{algocf}{Algorithm}{Algorithms}	
\crefname{table}{Table}{Tables}	
\crefname{figure}{Figure}{Figures}
\crefname{algorithm}{Algorithm}{Algorithms}
\crefname{section}{Section}{Sections}
\newcommand{\SCP}{{SCP}\xspace}
\newcommand{\PSCP}{{PSCP}\xspace}
\newcommand{\MIR}{{MIR}\xspace}
\newcommand{\LP}{\text{LP}\xspace}
\newcommand{\CPX}{\texttt{CPX}\xspace}
\newcommand{\solver}[1]{\textsc{#1}\xspace}
\newcommand{\CPLEX}{\solver{CPLEX}}
\newcommand{\MIP}{{MIP}\xspace}
\newcommand{\BD}{{BD}\xspace}
\newcommand{\SBD}{\texttt{BD}\xspace}
\newcommand{\RBD}{\texttt{RBD}\xspace}
\newcommand{\AUTO}{\texttt{AUTO-BD}\xspace}
\newcommand{\F}{\mathcal{F}}
\newcommand{\N}{\mathcal{N}}
\newcommand{\CP}{\mathcal{P}}
\newcommand{\CR}{\mathcal{R}}
\newcommand{\CS}{\mathcal{S}}
\newcommand{\X}{\mathcal{X}}
\renewcommand{\L}{\mathcal{L}}
\newcommand{\U}{\mathcal{U}}
\newcommand{\Y}{\mathcal{Y}}
\newcommand{\setN}{[n]}
\newcommand{\setM}{[m]}
\newcommand{\setS}{[s]}
\newcommand{\T}{\texttt{T}\xspace}
\newcommand{\NODE}{\texttt{N}\xspace}
\newcommand{\G}{\texttt{G}\xspace}
\newcommand{\Tone}{\texttt{$\text{T}_1$}\xspace}
\newcommand{\Ttwo}{\texttt{$\text{T}_2$}\xspace}
\newcommand{\CPU}{\text{CPU}\xspace}
\newcommand{\BasicSBD}{\texttt{BasicBD}\xspace}
\newcommand{\BasicSBDI}{\texttt{BasicBD$+$IC}\xspace}
\newcommand{\BasicSBDIH}{\texttt{BasicBD$+$IC$+$H}\xspace}
\newcommand{\BasicSBDIHV}{\texttt{BasicBD$+$IC$+$H$+$MIR}\xspace}
\providecommand{\keywords}[1]{\small \quad \quad \textbf{Keywords:} #1}
\newcommand{\BnC}{{B\&C}\xspace}
\newcommand{\RENS}{{RENS}\xspace}
\newcommand{\CCPs}{{CCPs}\xspace}
\DeclareMathOperator{\supp}{supp}
\def\headlinea{\specialrule{0.5pt}{0pt}{3pt}}
\def\headlineb{\specialrule{0.5pt}{1pt}{3pt}}
\def\footlinea{\specialrule{0.5pt}{1pt}{1pt}}
\def\footlineb{\specialrule{0.5pt}{1pt}{0pt}}
\def\endline{\specialrule{0.5pt}{3pt}{0pt}}
\newcommand{\myarraystretch}{\renewcommand{\arraystretch}{0.9}}
\newcommand{\mytabcolsepeach}{\setlength{\tabcolsep}{9pt}}
\title{Benders decomposition for the large-scale probabilistic set covering problem}
\author[a,b,c]{Jie Liang}
\author[a]{Cheng-Yang Yu}
\author[a]{Wei Lv}
\author[a,b]{Wei-Kun Chen}
\author[c,d]{Yu-Hong Dai}
\affil[a]{\small School of Mathematics and Statistics, Beijing Institute of Technology, Beijing 100081, China\\\textit{\{liangjie,yuchengyang,lvwei,chenweikun\}@bit.edu.cn}}
\affil[b]{\small State Key Laboratory of Cryptology, P. O. Box 5159, Beijing, 100878, China}
\affil[c]{\small Sichuan Aerospace Chuannan Initiating Explosive Technology Limited, Luzhou 646000, China}
\affil[d]{\small Academy of Mathematics and Systems Science, Chinese Academy of Sciences, Beijing 100190, China}
\affil[e]{\small School of Mathematical Sciences, University of Chinese Academy of Sciences, Beijing 100049, China\\\textit{dyh@lsec.cc.ac.cn}}
\date{}
\begin{document}
	
	\maketitle
	
	\begin{abstract}
		In this paper, we consider a probabilistic set covering problem (\PSCP) in which each $0$-$1$ row of the constraint matrix is random with a finite discrete distribution, 
		and the objective is to minimize the total cost of the selected columns such that each row is covered with a prespecified probability.
		{We develop an effective decomposition algorithm for the \PSCP based on the Benders reformulation of a standard mixed integer programming (\MIP) formulation.}
		The proposed Benders decomposition (\BD) algorithm {enjoys two key advantages: (i) the number of variables in the underlying Benders reformulation is equal to the number of columns but independent of the number of scenarios of the random data;}
		and (ii) 
		the Benders feasibility cuts can be separated by an efficient  polynomial-time algorithm, which makes it particularly suitable for solving large-scale \PSCP{s}.
		We enhance the \BD algorithm by using initial cuts to strengthen the relaxed master problem, implementing an effective heuristic procedure to find high-quality feasible solutions, and adding mixed integer rounding enhanced Benders feasibility cuts to tighten the problem formulation.
		Numerical results demonstrate the efficiency of the proposed \BD algorithm over a state-of-the-art \MIP solver.
		Moreover, the proposed \BD algorithm can efficiently identify optimal solutions for instances with up to $500$ rows, $5000$ columns, and $2000$ scenarios of the random rows.
		\vspace{8pt} \\
		\keywords{Large-scale optimization $\cdot$ Benders decomposition $\cdot$ probabilistic set covering problem $\cdot$ mixed integer programming Location}
	\end{abstract}

	\section{Introduction}

Given a 0-1  $m\times n$ matrix $A$ and an $n$-dimensional vector $c$ representing the cost of the columns, 
the set covering problem (\SCP) is to seek a subset of columns such that the sum of the costs of the selected columns is minimized while covering 
all rows of matrix $A$.
The mathematical formulation for the \SCP  \citep{Toregas1971} can be written as 
\begin{equation}\label{SCP}\tag{SCP}
	\min \left \{c^\top x\, : \,A_i x \geq 1,~ \forall~ i \in \setM, ~ x \in \{0,1\}^n\right\},
\end{equation}
where $A_i$ is the $i$-th row of matrix $A$, variable $x_j$ takes value $1$ if column $j$ is selected and $0$ otherwise, and $[m]:=\{1,\ldots, m\}$ (throughout the paper, for a nonnegative integer $k$, we denote $[k] = \{1, \dots, k\}$ where $[k] = \varnothing$ if $k = 0$).
The \SCP is one of the fundamental combinatorial optimization problems, and serves as a building block in various applications, such as facility location, manufacturing, vehicle routing, railway and airline crew scheduling, among many others.
We refer to the recent surveys \cite{Farahani2012,Garcia2019} and the references therein for a detailed discussion of the \SCP and its various exact and heuristic algorithms.

In this paper, we consider the probabilistic set covering problem (\PSCP) in which each ${A_i}$ is a Bernoulli random vector,
 and the objective is to find a minimum-cost
collection of columns that covers each row $i$ with a probability at least $1 - \epsilon_i$:
\begin{equation}\label{PSCP}\tag{PSCP}
	\min \left \{c^\top x\, : \,\mathbb{P}\left\{A_i x \geq 1\right\} \geq 1- \epsilon_i,~ \forall~ i \in \setM, ~ x \in \{0,1\}^n\right\}.
\end{equation}
Here, $\epsilon_i \in (0,1)$, $i \in [m]$, are prespecified  reliability labels.
As opposed to the \SCP, the \PSCP can effectively capture the uncertainty of the ``coverage'' arising in many applications. 
For example,
in the surveillance problem, there is a nontrivial probability that sensor $j$ will be able to detect target $i$ \citep{Ahmed2013};
in the location of emergency services, the event of a facility located at location $j$ to provide service to a customer $i$ is uncertain due to the variability of the traveling time \citep{Beraldi2010};
in the airline crew-scheduling problem, the event of a crew covering a flight is uncertain due to the possible crew no-shows \citep{Fischetti2012}.

 Observe that \eqref{PSCP} is an individual probabilistic programming formulation \citep{Charnes1958} where each individual constraint is required to be satisfied with a prespecified probability. 
 Another approach to capture the uncertainty of the ``coverage'' in the \SCP is to use the joint probabilistic programming framework; 
 see, e.g., \cite{Ruszczynski2002,Luedtke2014,Lejeune2016,Hu2022,Kucukyavuz2022,Jiang2022} among many of them. 
 In this framework, the $m$ probabilistic  constraints in  \eqref{PSCP} are replaced by a \emph{joint} probabilistic constraint $\mathbb{P}(Ax \geq \boldsymbol{1}) \geq 1-\epsilon$ ($\epsilon \in (0,1)$), where $\boldsymbol{1}$ is the $m$-dimensional all-ones vector.
 Compared with the joint probabilistic programming framework, the individual probabilistic programming framework allows for the inhomogeneous reliability levels $\epsilon_i$ for different rows \cite[Chapter 5]{Haneveld2020} and thus it is more flexible in many applications.
 For example,  in the aforementioned applications, some targets, customers, or flights are usually much important than others, and thus it is reasonable to provide different reliability levels $\epsilon_i$ for the detection or coverage of them.
 Due to this advantage,  \eqref{PSCP}, the individual probabilistic programming formulation, has received many attentions in the literature.
For instance, \cite{Hwang2004} considered the case where the random coefficients $a_{ij}$ of matrix $A$ are independent. 
\cite{Fischetti2012} investigated the case where the random columns of matrix $A$ are independent. 
\cite{Ahmed2013} studied the case where the random coefficients in each row of matrix $A$ are conditionally independent, 
meaning that the coefficients $a_{ij}$ for $j \in [n]$ are independent when conditioned on some exogenous factors but may be correlated without such conditioning.

Unlike the existing works, we do not assume any independent or special correlated distributions of $A_i$ in \eqref{PSCP}.
Instead, we consider general finite discrete distributions of $A_i$, that is, each $A_i$ has a finite number of scenarios.
The assumption of finite discrete distributions is not restrictive as by taking an independent Monte Carlo sampling from general distributions of $A_i$, we can obtain a problem that satisfies this assumption and can be treated as \rev{a sample average approximation (SAA)} of \eqref{PSCP} with general distributions \citep{Kleywegt2002}.
We refer to \cite{Nemirovski2006,Luedtke2008} for the theoretical and empirical evidence demonstrating that solving this sampling version can approximately solve the original problem with general distributions of $A_i$.  

\eqref{PSCP} with finite numbers of scenarios of $A_i$ can be transformed into a deterministic mixed integer programming (\MIP) formulation, known as the big-$M$ formulation  \citep{Kucukyavuz2022}, which can be solved to global optimality using general-purpose \MIP solvers.
However, due to the intrinsic NP-hardness, and particularly the huge number of binary variables and constraints (one for every scenario of $A_i$) and weak linear programming (\LP) relaxation of the big-$M$ formulation, the above approach cannot solve problem \eqref{IP-IPSCP} exactly or return a high-quality solution within a reasonable timelimit, especially when the number of scenarios is large.
\subsection{Contributions and Outlines}
The main motivation of this work is to fill this research gap, i.e., to develop an efficient  Benders decomposition (\BD) algorithm for solving large-scale \PSCP{s}. 
Two key features of the proposed \BD algorithm, are  (i) the number of variables in the Benders reformulation of the big-$M$ formulation is equal to the number of columns $n$ but independent of the number of scenarios of the random rows; and (ii) the Benders feasibility cuts can be separated by an efficient polynomial-time algorithm, which renders it particularly suitable to solve large-scale \PSCP{s}, compared with the direct use of the state-of-the-art general-purpose \MIP solvers.

We summarize two technical contributions of the paper as follows.
\begin{itemize}
	\item 
	We show that how the \PSCP relates to the partial set covering problem investigated in \cite{Daskin1999,Cordeau2019}.
	In particular, the \MIP formulation of the partial set covering problem can be seen as a special case of the big-$M$ \MIP formulation of the \PSCP.
	Based on this observation, we generalize the single-cut \BD algorithm proposed by \cite{Cordeau2019} for solving the partial set covering problem and present an efficient multi-cut \BD algorithm for solving the \PSCP.
	\item
	We propose three enhancement techniques to improve the performance of the proposed \BD algorithm.
	These techniques include using initial cuts to strengthen the relaxed master problem, implementing an effective heuristic procedure to find a high-quality feasible solution, and developing mixed integer rounding (\MIR)-enhanced Benders feasibility cuts to tighten the formulation of the \PSCP.
\end{itemize}

Numerical results demonstrate that the proposed enhancement techniques can effectively improve the formulation of the \PSCP or enable to find a high-quality solution, thereby significantly improving the overall performance of the \BD algorithm; 
the proposed \BD algorithm significantly outperforms a state-of-the-art \MIP solver's branch-and-cut and automatic \BD algorithms.
In particular, the proposed \BD algorithm enables to quickly identify optimal solutions for \PSCP{s} with up to 500 rows, 5000 columns, and 2000 scenarios of the random rows.

The rest of the article is organized as follows.
In \cref{subsec:literaturereview}, we review the relevant literature on the \PSCP.
In \cref{sect:MIP}, we introduce the \MIP formulation for the \PSCP, discuss the relation to the partial set covering problem, and demonstrate the difficulty of solving it. 
In \cref{sect:BD}, we present the \BD algorithm for the \PSCP.
In \cref{sect:implementation}, we develop three enhancement techniques for the \BD algorithm.
In \cref{sect:numer}, we perform numerical experiments to demonstrate the efficiency of the proposed enhancement techniques and the \BD algorithm for the \PSCP.
Finally, we summarize the conclusions in \cref{sect:conclusion}.

\subsection{Literature Review}
\label{subsec:literaturereview}

	\eqref{PSCP} belongs to the class of chance-constrained programs (\CCPs),
	\rev{which have been extensively studied in the literature; see \cite{Ahmed2008,Birge2011,Kucukyavuz2022}  for detailed reviews of \CCPs. 
	From a computational perspective, \CCPs are challenging to solve due to the following two difficulties. 
	First, given a point $x$, checking the feasibility of  $x$ could be computationally demanding. 
	Second, the feasible region defined by the probabilistic constraints is generally nonconvex.
	To address the above two challenges, the SAA approach can be used where an approximation problem based on an independent Monte Carlo sample of the random data is solved to obtain a high-quality solution of the original problem. 
	The main advantage of the SAA approach is its generality as it only requires to be able to sample from this distribution of the random data but does not require knowledge of the distribution.
	In this paper, we also use the SAA approach to tackle \PSCP{s} with a general  distribution of the random data $\{A_i\}$.
	It should be mentioned that in addition to the SAA approach, various conservative approximation approaches have been proposed in the literature; see, e.g.,\cite{Nemirovski2007,Ben2009,Xie2020,Jiang2022}.
	These approaches attempt to approximate the original problem as a convex optimization problem, which is computationally tractable and can usually produce high-quality feasible solutions.
	Unfortunately, for \CCPs with discrete variables (such as the considered \PSCP), the above approximations may still be difficult to solve as the resultant feasible regions are still nonconvex.}

\rev{For the probabilistic version of \SCP{s},}
let us consider a more general case of the covering constraints $A_i x\geq \xi_i$ , where $A_i \in \{0,1\}^n$ and $\xi_i \in \{0,1\}$, $i \in [m]$, may be random.
Based on the uncertainty in the covering constraints $A_i x\geq \xi_i$, $i \in [m]$ (or equivalently, $Ax \geq \xi$),
the  \PSCP variants can generally be classified into two categories: (i) uncertainty arises in the right-hand side $\xi \in \{0,1\}^m$; and (ii) uncertainty arises in the constraint matrix $A\in \{0,1\}^{m\times n}$.
\cite{Patrizia2002} first studied the joint probabilistic set covering problem, in which there is a single joint chance constraint and the uncertainty arises in the right-hand side $\xi$:
\begin{equation} 
	\label{ccprhs}
\min \left \{c^\top x\, : \,\mathbb{P}\left\{A x \geq  \xi\right\} \geq 1- \epsilon,~ x \in \{0,1\}^n\right\}.
\end{equation} 
{The authors developed a specialized branch-and-bound algorithm, which involves enumerating the so-called $(1-\epsilon)$-efficient point and solving a deterministic \SCP for each $(1-\epsilon)$-efficient point.}
Subsequently, \cite{Saxena2010} developed an equivalent \MIP formulation for problem \eqref{ccprhs} and derived polarity cuts to improve the computational efficiency.
\cite{Luedtke2008} studied problem \eqref{ccprhs} with a finite discrete distribution of the random vector $\xi$ and proposed a preprocessing technique to improve the performance of employing \MIP solvers in solving the equivalent big-$M$ formulation.
\rev{\cite{Chen2024} further proposed an efficient \BD algorithm for solving large-scale problems.}

For the problem with uncertainty on the constraint matrix $A$, as considered in this paper,  \cite{Hwang2004} and \cite{Fischetti2012}  investigated the cases in which the random coefficients $a_{ij}$ of matrix $A$ and the random columns of matrix $A$ are independent, respectively. 
In both cases, \eqref{PSCP} can be equivalently formulated as a compact \MIP problem and thus can be solved to optimality by general-purpose \MIP solvers.
\cite{Ahmed2013} established a compact mixed integer nonlinear programming formulation for \eqref{PSCP} where the random coefficients in each row of matrix $A$ are conditionally independent.
\cite{Beraldi2010} proposed a specialized branch-and-bound algorithm to solve the joint \PSCP in which there is a single joint chance constraint and the random matrix $A$ has a finite discrete distribution of $A$, that is, $\min \left \{c^\top x\, : \,\mathbb{P}\left\{A x \geq  1\right\} \geq 1- \epsilon,~ x \in \{0,1\}^n\right\}$.
\cite{Song2013} developed several branch-and-cut approaches for solving the joint \PSCP reformulated from the reliable network design problem.
\cite{Wu2019} investigated the joint probabilistic partial set covering problem and developed a delay constraint generation algorithm.
\rev{\cite{Ahmed2013,Lutter2017,Shen2023,Jiang2024}} investigated various robust \PSCP{s}.
We refer to \cite{Azizi2022} for more \PSCP  variants and their applications in location problems.

Recently, the \BD algorithm has been applied to solve various large-scale \SCP variants,
including the partial set covering and maximal covering location problems \citep{Cordeau2019},
the probabilistic partial set covering problem \citep{Wu2019},
the maximum availability service
facility location problem \citep{Muffak2023}, 
the capacitated and uncapacitated facility location problems \citep{Fischetti2016,Fischetti2017,Weninger2023}, and the location problems with interconnected facilities \citep{Kuzbakov2023}.
We refer to \cite{Rahmaniani2017} for a detailed survey of the \BD algorithm and its application in many optimization problems.
Special attention should be paid to the partial set covering problem considered in \cite{Cordeau2019}, which minimizes the cost of the selected columns while forcing a certain amount of (weighted) rows to be covered.
\cite{Cordeau2019} proposed an efficient \BD algorithm that is capable of solving instances with 100 columns and up to 40 million rows.
In this paper, we show that the \MIP formulation of this problem can be seen as a special case of the big-$M$ formulation of \eqref{PSCP} (with $m=1$), and extend the \BD algorithm of \cite{Cordeau2019} for this special case to the general case. 
Moreover, to overcome the weakness of the \LP relaxation that appears in the general problem \eqref{PSCP}, we propose three enhancement techniques that significantly improve the performance of the \BD algorithm.

	\section{Mixed integer programming formulation}\label{sect:MIP}

Consider \eqref{PSCP} with finite discrete distributions of random {vectors $A_i$}, $i \in \setM$, that
is, for each $i$, there exist vectors ${A}_i^\omega \in \{0,1\}^n$ and $p_i^\omega>0$,  $\omega\in [s] \footnote{For simplicity of exposition, here we assume that the numbers of scenarios of $A_i$ are identical for all $i \in [m]$. The extension to the case that the numbers of scenarios of $A_i$ are different for different $i$ is straightforward.}$, such that 
\begin{equation*}
	\mathbb{P}\{{A_i} = {A}_i^\omega\} = p_i^\omega,~\forall~\omega \in [s],~ \text{and}~\sum_{\omega \in [s]}p_i^\omega = 1.
\end{equation*}
We introduce binary variables $z_i^\omega$ for $i \in \setM$ and $\omega \in \setS$,
where $z_i^\omega = 1$ guarantees that in scenario $\omega$, ${A}_i^\omega x \geq 1$ holds.
Then, \eqref{PSCP} can be written as the following \MIP formulation:
\begin{subequations}\label{IP-IPSCP}
	\begin{align}
		\min ~ &c^\top x \nonumber\\
		\text{s.t.}~~  & A_i^\omega x \geq z_i^\omega, & \forall~i \in \setM, ~\omega \in \setS, \label{ip_ipscp_cons1}\\
		& \sum_{\omega \in \setS}{p_i^\omega} z_i^\omega \geq 1 - \epsilon_i,& \forall~i \in \setM, \label{ip_ipscp_cons2}\\ 
		& x_j \in \{0,1\}, & \forall~j \in \setN,  \label{ip_ipscp_x}\\
		& z_i^\omega \in \{0, 1\}, &\forall ~i \in \setM, ~\omega \in \setS. \label{ip_ipscp_cons3}
	\end{align}
\end{subequations}
If $z_i^\omega = 1$, \eqref{ip_ipscp_cons1} enforces $A_i^\omega x \geq 1$; otherwise, it reduces to the trivial inequality $A_i^\omega x \geq 0$. 
Constraints \eqref{ip_ipscp_cons1} are referred to as big-$M$ constraints where the big-$M$ coefficients are given by $1$; see \cite{Kucukyavuz2022}.
Notice that constraints \eqref{ip_ipscp_cons1}, \eqref{ip_ipscp_cons2}, and \eqref{ip_ipscp_cons3} ensure that the probability $ \mathbb{P}\{A_i x \geq 1\} $ is at least $1-\epsilon_i$.

\begin{remark}
	When $m=1$, there exists only a single constraint in \eqref{ip_ipscp_cons2}.
	In this case, problem \eqref{IP-IPSCP} is equivalent to the partial set covering problem \citep{Daskin1999, Cordeau2019} where each $j \in [n]$ and $\omega \in [s]$ corresponds to a facility and a customer, respectively, and constraint \eqref{ip_ipscp_cons2} reduces to the so-called coverage constraint.
\end{remark}

Problem \eqref{IP-IPSCP} is an \MIP problem which can be solved to optimality using the state-of-the-art \MIP solvers. 
However, the following two difficulties make it hard to solve large-scale \PSCP{s} efficiently using the above approach.
First, as opposed to \eqref{SCP} which involves only $m$ constraints and $n$ variables, problem \eqref{IP-IPSCP} involves $m s + m$ constraints and $m s + n$ variables, resulting in a problem size that is one order of magnitude larger.
The large problem size leads to a large \LP relaxation and a large search space, thereby making it hard for the \MIP solvers to solve \PSCP{s} efficiently, especially for the case with a large number of scenarios $s$.
To overcome this weakness, we present the following result stating that the integrality constraints on binary variables $z_i^\omega$ in problem  \eqref{IP-IPSCP} can be relaxed; see \cite{Cordeau2019} for a similar result for the partial set covering problem.
\begin{property}\label{proprelax1}
	The optimal value of problem \eqref{IP-IPSCP} does not change if the integrality constraints \eqref{ip_ipscp_cons3} are relaxed into 
	\begin{equation}
		\label{cont}
		0 \leq z_i^\omega \leq 1, ~\forall~i\in \setM, ~\omega \in \setS.
	\end{equation}
\end{property}
\begin{proof}
	Let $(\bar{x}, \bar{z})$ be an optimal solution of problem \eqref{IP-IPSCP} with  \eqref{ip_ipscp_cons3} replaced by \eqref{cont} and 
	$\F =\{ (i,\omega) \in \setM \times \setS\, : \, 0 < \bar{z}_i^\omega < 1 \}$.
	If $\F=\varnothing$, the statement follows. 
	Otherwise, as $\bar{x},~A_i^\omega\in \{0,1\}^n$, $A_i^\omega \bar{x}$, $(i, \omega) \in \F$, must be an integer value larger than or equal to $1$. 
	This implies that setting  $\bar{z}_i^\omega = 1$ for all $(i, \omega) \in \F$ will yield another feasible solution to problem \eqref{IP-IPSCP} with the same objective value. 
	Therefore, the statement follows as well.
\end{proof}
\noindent Based on the above property, we can bypass the first difficulty by generalizing the single-cut \BD approach for the partial set covering problem (i.e., problem \eqref{IP-IPSCP} with $m=1$) of \cite{Cordeau2019} and propose a scalable multi-cut \BD algorithm to solve problem \eqref{IP-IPSCP} (with arbitrary $m$)
where variables $z_i^\omega$ are projected out from the problem and replaced by the Benders feasibility cuts; see \cref{sect:BD} further ahead.

The second difficulty is that the \LP relaxation of problem \eqref{IP-IPSCP} could be very weak in terms of providing a  poor \LP bound, as will be shown in our experiments.
This is in sharp contrast to the special case $m=1$ in which \cite{Cordeau2019} observed that the difference between the optimal value of problem \eqref{IP-IPSCP} with $m=1$ and that of its \LP relaxation is usually very small.
The weakness of the \LP relaxation for the general case makes it challenging for the general-purposed \MIP solvers or the coming \BD approach to solve problem \eqref{IP-IPSCP} exactly or to obtain high-quality solutions.
In \cref{sect:implementation}, we will develop three enhancement techniques to overcome this weakness.

	\section{Benders decomposition}\label{sect:BD}

In this section, we first introduce an equivalent Benders reformulation for problem \eqref{IP-IPSCP} and then present the implementation details for solving the Benders reformulation including an efficient  implementation for the separation of the Benders feasibility cuts.

\subsection{Benders reformulation}\label{BD_IPSCP}

Observe that for a fixed $x\in \{0, 1\}^n$, problem \eqref{IP-IPSCP} can be decomposed into $m$ subproblems,  
each of which determines whether $x$ satisfies the probabilistic constraint {$\mathbb{P}\left\{A_i x \geq 1\right\} \geq 1- \epsilon_i$}.
This observation, combined with \cref{proprelax1}, enables to develop a multi-cut Benders reformulation of problem \eqref{IP-IPSCP}
 (also called {Benders master problem}):
\begin{equation}
	\label{mp-ipscp}
	\min_{x}\left\{ c^\top x\, : \, B_i^r(x) \geq 0,~ \forall~r \in \CR(\CP_i), ~ i \in  \setM, ~x \in \{0, 1\}^n \right\},
\end{equation}
where for each $i \in  \setM$,
$\CR(\CP_i)$ is the set of extreme rays of polyhedron $\CP_i$ defined by the dual of the {{Benders subproblem}} $i$ and $B_i^r(x) \geq 0$ refers to the corresponding {Benders feasibility cuts}.
Note that, no {Benders optimality cut} is  needed, as variables  $z_i^\omega$ do not appear in the objective function of problem \eqref{IP-IPSCP}.
Given a vector $\bar{x} \in [0,1]^n$, the {Benders subproblem} $i$ and its dual can be written as 
 \begin{equation}
	\label{sp-ipscp-1}
		\min_{z_i}\left\{ 0 \, :\, \eqref{ip_ipscp_cons2}, \, z_i^\omega \leq A_i^\omega\bar{x}, ~ 0 \leq z_i^\omega \leq 1, ~ \forall~ \omega \in \setS \right\},
\end{equation}
and 
\begin{equation}
	\label{dsp-ipscp}
 	\max_{\pi_i, ~\sigma_i,~\gamma_i} \left \{ (1 - \epsilon_i) \gamma_i - \sum_{\omega \in \setS} \left(\pi_i^\omega A_i^\omega\bar{x} + \sigma_i^\omega \right) \, : \, (\pi_i,\, \sigma_i,\, \gamma_i) \in \CP_i \right \}, 
\end{equation}
where  $\pi_i^\omega$ and $\sigma_i^\omega$, $ \omega \in \setS$, are the dual variables associated with constraints $z_i^\omega \leq A_i^\omega\bar{x}$ and $z_i^\omega \leq 1$, respectively,  
$\gamma_i$ is the dual variable associated with constraint \eqref{ip_ipscp_cons2},
and  
\begin{equation}\label{dsp-ispcp-p}
	\begin{aligned}
		\CP_i = \left \{ (\pi_i,\, \sigma_i,\, \gamma_i) \in \mathbb{R}_+^{ s} \times \mathbb{R}_+^{ s}  \times \mathbb{R}_+ \, : \, 
		\pi_i^\omega + \sigma_i^\omega \geq p_i^\omega\gamma_i, \,\forall~ \omega \in \setS \right\}.
	\end{aligned}
\end{equation}
By the \LP duality theory, if the dual subproblem \eqref{dsp-ipscp} is unbounded, i.e., $\CP_i$ has an extreme ray $(\hat{\pi}_i,\, \hat{\sigma}_i,\, \hat{\gamma}_i)$ such that 
	$(1 - \epsilon_i) \hat{\gamma}_i - \sum_{\omega \in \setS} \left(\hat{\pi}_i^\omega A_i^\omega\bar{x} + \hat{\sigma}_i^\omega \right) > 0$, 
	then problem \eqref{sp-ipscp-1} is infeasible.
	Thus the {Benders feasibility cut} of subproblem $i$ violated by the infeasible point  $\bar{x} \in [0, 1]^n$ reads
	\begin{equation}\label{inifeacut_ipscp}
		\sum_{\omega \in \setS} \left(\hat{\pi}_i^\omega A_i^\omega x + \hat{\sigma}_i^\omega \right) \geq (1 - \epsilon_i) \hat{\gamma}_i .
	\end{equation}

	To solve the \LP problem \eqref{dsp-ipscp} and obtain the Benders feasibility cut \eqref{inifeacut_ipscp}, we follow \cite{Cordeau2019} to use an exact combinatorial approach.  
	In particular, let $(\pi_i, \sigma_i, \gamma_i)$ be  an optimal solution of problem  \eqref{dsp-ipscp}. 
	If $\gamma_i=0$, then we can set $\pi_i^\omega= \sigma_i^\omega=0$ for all $\omega \in [s]$ (as their objective coefficients are nonpositive), and thus
	the optimal value of problem \eqref{dsp-ipscp} is zero in this case.
	As a result, to determine whether problem  \eqref{dsp-ipscp} is unbounded, it is sufficient to enforce $\gamma_i > 0$ in problem \eqref{dsp-ipscp}. 
	By normalizing $\gamma_i = 1$ in problem  \eqref{dsp-ipscp}, we obtain
	\begin{equation}
		\label{dsp-ipscp2}
		\max_{\pi_i, ~\sigma_i}\left\{(1 - \epsilon_i) - \sum_{\omega \in \setS} \left(\pi_i^\omega A_i^\omega\bar{x} + \sigma_i^\omega \right) \, : \, \pi_i^\omega + \sigma_i^\omega \geq p_i^\omega,~\pi_i^\omega\geq 0,~ \sigma_i^\omega \geq 0, \, \forall~ \omega \in \setS \right\}.
	\end{equation}
	{Observe that $(\pi_i, \sigma_i) \in \mathbb{R}_+^{s} \times \mathbb{R}_+^{s}$ is a feasible solution of problem \eqref{dsp-ipscp2} (with an objective value of $f$) if and only if $ (t\pi_i, t\sigma_i, t) \in \mathbb{R}_+^{s} \times \mathbb{R}_+^{s} \times \mathbb{R}_{+} $ with $t > 0$ is a feasible solution of problem \eqref{dsp-ipscp} (with an objective value of $tf$).}
	Thus, problem  \eqref{dsp-ipscp} is unbounded if and only if the optimal value of problem \eqref{dsp-ipscp2} is larger than zero.
	It is easy to see that problem \eqref{dsp-ipscp2} is bounded and one of the optimal solutions is given by
	\begin{equation}\label{opt-dsp-ipscp}
		\begin{aligned}
			(\bar{\pi}_i^\omega, \bar{\sigma}_i^\omega)= \begin{cases}
				(p^\omega_i,0), \quad &\text{if} \; A_i^\omega\bar{x} \leq 1; \\
				(0, p^\omega_i), \quad &\text{otherwise},
			\end{cases}
		\quad \forall~ \omega \in \setS.	
		\end{aligned}
	\end{equation}
	Thus, the {Benders feasibility cut} \eqref{inifeacut_ipscp} reduces to
	\begin{equation}
		\label{feas-cut2}
		\sum_{\omega \in \setS, ~A_i^\omega\bar{x} \leq 1} p_i^\omega A_i^\omega x + \sum_{\omega \in \setS, ~A_i^\omega\bar{x} > 1} p_i^\omega  \geq 1 - \epsilon_i.
	\end{equation}
	Note that \eqref{feas-cut2} can also be derived by combining $A_i^\omega x\leq 1$ for $\omega \in [s]$ with $A_i^\omega \bar{x} \leq 1$, $z_i^\omega \leq 1$ for $\omega \in [s]$ with $A_i^\omega \bar{x} > 1$, and \eqref{ip_ipscp_cons2}.
	Also note that if $A_i^\omega\bar{x} = 1$, we can alternatively set $(\bar{\pi}_i^\omega, \bar{\sigma}_i^\omega)= (0, p_i^\omega)$, possibly resulting in a different Benders  feasibility cut with the same violation at $\bar{x}$.
	However, in our preliminary experiments, we observed that the current strategy in \eqref{opt-dsp-ipscp} generally enables the coming \BD algorithm to return a better \LP bound at the root node and thus makes a better overall performance.
	A similar phenomenon was also observed by \cite{Cordeau2019} in the context of solving the partial set covering problem. 
	Due to this, we decide to set $(\bar{\pi}_i^\omega, \bar{\sigma}_i^\omega)= (p_i^\omega,0)$ for $i \in [m]$ with $A_i^\omega\bar{x} = 1$.

\subsection{Implementation}
\label{closed_formula}

To implement the \BD algorithm for solving problem \eqref{mp-ipscp}, we adopt a {branch-and-cut} (\BnC) approach,  in which Benders feasibility cuts \eqref{feas-cut2} are added on the fly at each node of the search tree (constructed by the \BnC approach). 
This approach, known as \emph{Branch-and-Benders-cut} \citep{Rahmaniani2017}, can be implemented within the cut callback framework available in modern general-purpose \MIP solvers and 
has been widely applied to implement the \BD algorithm for various problems; see  \cite{Perez2014,Gendron2016,Fischetti2016,Cordeau2019,Guney2021}, among many of them.

We now discuss the separation for the Benders feasibility cuts \eqref{feas-cut2} in the \BD algorithm.
Given a point $\bar{x}\in [0,1]^n$, to determine whether there exists a violated {Benders feasibility cut} \eqref{feas-cut2} for each $i \in \setM$,
it is sufficient to compute $A_i^\omega\bar{x}$ for all $\omega \in \setS$ and test whether the condition $\sum_{\omega \in \setS, ~A_i^\omega\bar{x} \leq 1} p_i^\omega A_i^\omega\bar{x} + \sum_{\omega \in \setS, ~A_i^\omega\bar{x} > 1} p_i^\omega < 1 - \epsilon_i$
 is satisfied or not.
This provides an $\mathcal{O}\Bigl(\sum_{i \in  \setM}$ $\sum_{\omega \in \setS} |\supp(A_i^\omega)|\Bigr)$ algorithm for the separation of {Benders feasibility cuts} \eqref{feas-cut2} (for a vector $a\in \mathbb{R}^n$, we use $\supp(a)=\{i \in [n] \, : \, a_i \neq 0 \}$ to denote its support).
However, our preliminary experiments showed that 
this direct row-oriented implementation for the computations of  $A_i^\omega\bar{x}$ is too time-consuming, 
especially when the number of scenarios $s$ is large or vectors $A_i^\omega$ are dense. 
{To resolve this issue, we note that point $\bar{x}$ is usually very sparse  encountered in the \BD algorithm.
To this end, we use a \emph{column-oriented} implementation to speed up the computations of $A_i^\omega\bar{x}$, $\omega \in \setS$.}
Specifically, we can compute $A_i^\omega\bar{x}$, $\omega \in \setS$, using 
\begin{equation}
	\label{CompAx}
	\begin{bmatrix} 
		A_i^1  \bar{x} \\
		\vdots\\
		A_i^{s} 	\bar{x} 
	\end{bmatrix} =: \bm{A}_i \bar{x} = \sum_{j \in  \setN}{[\bm{A}_i]}_j \bar{x}_j =\sum_{j\in  \supp(\bar{x})}{[\bm{A}_i]}_j \bar{x}_j,
\end{equation}
with the complexity of $\mathcal{O}\left(\sum_{j \in \supp(\bar{x})} |\supp({[\bm{A}_i]}_j)|\right)$, where ${[\bm{A}_i]}_j \in \mathbb{R}^{s}$ is the $j$-th column of matrix $\bm{A}_i\in \mathbb{R}^{s \times n}$.
As  $\sum_{j \in \supp(\bar{x})} |\supp({[\bm{A}_i]}_j)|\leq  \sum_{j \in  \setN} |\supp({[\bm{A}_i]}_j)|= \sum_{\omega \in \setS} |\supp(A_i^\omega)|$, 
using \eqref{CompAx} for the computations of  $A_i^\omega\bar{x}$, $\omega \in \setS$, is significantly more efficient than the direct row-oriented implementation, especially when $\bar{x}$ is very sparse, i.e., $|\supp(\bar{x})| \ll n$.

The above $\mathcal{O}(\sum_{i \in [m]}\sum_{\omega \in [s]} |\supp(A_i^\omega)|)$ polynomial-time separation algorithm can also be used to detect whether problem  \eqref{PSCP} (or problem \eqref{IP-IPSCP}) has a feasible solution.
Indeed, letting $x, y \in \{0,1\}^n$ with $x \leq y$,  if $x$ is a feasible solution of problem \eqref{PSCP}, then $y$ must also be a feasible solution. 
Therefore, detecting whether problem  \eqref{PSCP} has a feasible solution is equivalent to checking whether the all-ones vector $\boldsymbol{1}$ is a feasible solution of problem  \eqref{PSCP}. 
The latter can be done by checking whether the Benders feasibility cut \eqref{feas-cut2} with $\bar{x}= \boldsymbol{1}$ is violated by vector $\boldsymbol{1}$ for some $i \in [m]$.
If there exist some $i \in [m]$ for which \eqref{feas-cut2} with $\bar{x}= \boldsymbol{1}$ is violated by vector $ \boldsymbol{1}$, i.e., 
\begin{equation}\label{infeascheck}
	\sum_{\omega \in [s], ~|\supp(A_i^\omega)| \geq 1} p_i^\omega < 1- \epsilon_i,
\end{equation}
then problem (\PSCP) (or problem \eqref{IP-IPSCP}) is infeasible; otherwise, problem  \eqref{PSCP} must have a feasible solution.
Condition \eqref{infeascheck} reflects that if $\{\epsilon_i\}$ are very small and $\{A_i^\omega\}$ are very sparse, then problem  \eqref{PSCP} is likely to be infeasible. 
However, as condition \eqref{infeascheck} can be quickly identified (with the complexity of $\mathcal{O}(\sum_{i \in [m]}\sum_{\omega \in [s]} |\supp(A_i^\omega)|)$), we, without loss of generality, assume that problem  \eqref{PSCP} has a feasible solution in the following.

To end of this section, we highlight two advantages of the proposed \BD algorithm as follows. 
First, in contrast to the big-$M$ formulation \eqref{IP-IPSCP} where the number of variables is ${n+ms}$ and grows linearly with the number of scenarios, the number of variables in the Benders reformulation \eqref{mp-ipscp} is only $n$ and thus  independent of the number of scenarios $s$.
Second, the Benders feasibility cuts \eqref{feas-cut2} can be separated in an $\mathcal{O}\left(\sum_{i \in  \setM}\sum_{\omega \in \setS} |\supp(A_i^\omega)|\right)$ polynomial-time algorithm, along with the above acceleration technique.
The above two advantages make the proposed \BD algorithm particularly suitable for solving large-scale \PSCP{s}, especially for those with a huge number of scenarios.

    \section{Improving performance of the Benders decomposition}
\label{sect:implementation}

In this section, we will propose three enhancement techniques to improve the performance of the proposed \BD algorithm for solving \PSCP{s}.
These techniques include adding tight initial cuts to strengthen the relaxed master problem, implementing a customized {relaxation enforced neighborhood search} (\RENS)  procedure \citep{Berthold2014} in the early stage of the \BD algorithm to find a high-quality feasible solution,
and adding \MIR-enhanced Benders feasibility cuts to tighten the \LP relaxation of formulation \eqref{mp-ipscp}.

\subsection{Initialization of relaxed master problem}
\label{subsec:initialcuts}
In general, the \BD algorithm can start with a relaxed version of the master problem \eqref{mp-ipscp} without any {Benders feasibility cut} added.
However, previous studies have demonstrated that it is computationally advantageous to add some tight initial cuts to strengthen the relaxed master problem \citep{Rahmaniani2017}. 
In the context of the \PSCP, we decide to add the Benders feasibility cuts  \eqref{feas-cut2} induced by point $x= \boldsymbol{0}$ (where $\boldsymbol{0}$ is the $n$-dimensional zero vector), i.e., 
\begin{equation}\label{initcut-pre}
\sum_{\omega \in \setS} p_i^\omega {A}_i^\omega x \geq 1 - \epsilon_i, ~\forall~ i \in  \setM.
\end{equation}
to the relaxed master problem as constraints.
It should be mentioned that adding constraints \eqref{initcut-pre} not only strengthens the relaxed master problem but also enables \MIP solvers to construct internal cuts (e.g., knapsack cover cuts) that further strengthen the \LP relaxation of formulation \eqref{mp-ipscp} and thus improve the performance of the \BD algorithm; see  \cref{subsec:techniques}.

\subsection{A customized relaxation enforced neighborhood search procedure}
\label{subsec:primalheuristic}
{One drawback of the \BD algorithm is that it never sees the complete formulation as a whole but only a part at a time, 
making it challenging to find high-quality feasible solutions in the early stage of the algorithm; see, e.g., \cite{Botton2013,Rahmaniani2017}.}
However, in practice, it is crucial to identify a high-quality feasible solution within a reasonable amount of time. 
Moreover, a high-quality feasible solution can serve as a useful upper bound that helps the \BD algorithm to prune the uninteresting nodes of the search tree (i.e., those that do not contain a feasible solution better than the incumbent).
In the following, we develop a customized \RENS procedure, proposed in \cite{Berthold2014} for generic \MIP{s}, to construct a high-quality solution for problem \eqref{mp-ipscp}.

Let $x^{\text{\LP}}$ be a feasible solution of the \LP relaxation of problem \eqref{mp-ipscp} (which can be accessed once the \BD algorithm finishes exploring the root node).
The \RENS procedure attempts to find the \emph{optimal rounding} of point $x^{\text{\LP}}$ by solving a subproblem of problem \eqref{mp-ipscp} \citep{Berthold2014}. 
This subproblem is 	defined by fixing variables $x_j=0$ for all $j \in \N^0$ and $x_j=1$ for all $j \in \N^1$ in problem \eqref{mp-ipscp}:
\begin{equation}
	\label{mp-ipscp-sp}
	\min_{x}\left\{ c^\top x\, : \, B_i^r(x) \geq 0,~ \forall~r \in \CR(\CP_i), ~ i \in  \setM,~x_j =0,~\forall~ j \in \N^0,~x_j=1,~\forall~j \in \N^1,~x \in \{0, 1\}^n \right\},
\end{equation} 
where
\begin{equation}
	\label{N01}
	\N^0 = \left \{j \in  \setN\, :\, x_j^{\text{\LP}} = 0\right\} \; \text{and}\; \N^1  = \left \{j \in  \setN\, :\, x_j^{\text{\LP}} = 1\right\},
\end{equation}
and $B_i^r(x) \geq 0,~ \forall~r \in \CR(\CP_i), ~ i \in  \setM,$ are all Benders feasibility cuts of the form \eqref{feas-cut2}.
As this subproblem is a restriction of problem \eqref{mp-ipscp}, it can still be solved by the \BD algorithm. 
Moreover, 
solving subproblem \eqref{mp-ipscp-sp} always returns a feasible solution.
\begin{proposition}
	\label{remark:heur}
	Let $x^\LP$ be a feasible solution of the \LP relaxation of problem \eqref{mp-ipscp}, and $\N^0$ and $\N^1$ be defined as in \eqref{N01}. 
	Then subproblem \eqref{mp-ipscp-sp} has a feasible solution.
\end{proposition}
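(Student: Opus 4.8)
The plan is to exhibit an explicit feasible point of subproblem~\eqref{mp-ipscp-sp}. The natural candidate is the integer vector $x^\star \in \{0,1\}^n$ obtained from $x^{\LP}$ by rounding every coordinate that is not forced to $0$ all the way up to $1$:
\begin{equation*}
	x^\star_j =
	\begin{cases}
		0, & j \in \N^0,\\
		1, & j \in \setN \setminus \N^0.
	\end{cases}
\end{equation*}
First I would check that $x^\star$ respects the variable fixings defining~\eqref{mp-ipscp-sp}: by construction $x^\star_j = 0$ for every $j \in \N^0$, and since $\N^0 \cap \N^1 = \varnothing$ we have $j \in \setN \setminus \N^0$ for every $j \in \N^1$, hence $x^\star_j = 1$ there; moreover $x^\star \in \{0,1\}^n$. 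Next I would record the coordinatewise inequality $x^\star \geq x^{\LP}$: on $\N^0$ both sides equal $0$, and off $\N^0$ we have $x^\star_j = 1 \geq x_j^{\LP}$ because $x^{\LP} \in [0,1]^n$.

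The crux is the monotonicity of the Benders feasibility cuts in~$x$. Every cut $B_i^r(x) \geq 0$ has the form~\eqref{inifeacut_ipscp}, namely $\sum_{\omega \in \setS}\bigl(\hat\pi_i^\omega A_i^\omega x + \hat\sigma_i^\omega\bigr) \geq (1-\epsilon_i)\hat\gamma_i$ for some extreme ray $(\hat\pi_i,\hat\sigma_i,\hat\gamma_i)$ of $\CP_i$, and $\CP_i$ lies in the nonnegative orthant $\mathbb{R}_+^{s}\times\mathbb{R}_+^{s}\times\mathbb{R}_+$; equivalently, in the combinatorial form~\eqref{feas-cut2} the coefficient of $x$ on the left-hand side is the nonnegative vector $\sum_{\omega \in \setS,\,A_i^\omega\bar x\leq 1} p_i^\omega A_i^\omega$. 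In either form the coefficient vector of $x$ is a nonnegative combination of the nonnegative rows $A_i^\omega\in\{0,1\}^n$, so the left-hand side of each cut is nondecreasing in $x$. Since $x^{\LP}$ is feasible for the \LP relaxation of~\eqref{mp-ipscp}, it satisfies $B_i^r(x^{\LP})\geq 0$ for all $r\in\CR(\CP_i)$ and $i\in\setM$; combining this with $x^\star \geq x^{\LP}$ and monotonicity yields $B_i^r(x^\star) \geq B_i^r(x^{\LP}) \geq 0$ for all such $r$ and $i$. Thus $x^\star$ satisfies all Benders feasibility cuts, all the fixings, and the integrality requirement, so it is feasible for~\eqref{mp-ipscp-sp}, as claimed.

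I do not expect a real obstacle here; the only delicate point is the direction of rounding. Rounding the free coordinates \emph{up} to $1$ is what preserves feasibility, and this works precisely because of the sign pattern of the cut coefficients---rounding down, or to the nearest integer, could violate a cut. It is also worth noting why one cannot simply take the all-ones vector $\boldsymbol{1}$ (which is feasible for~\eqref{mp-ipscp} under the standing assumption that~\eqref{PSCP} is feasible): it would in general violate the fixings $x_j = 0$ for $j \in \N^0$. The construction above sidesteps this because $x^{\LP}$ already vanishes on $\N^0$, so freezing those coordinates at $0$ costs nothing while all remaining coordinates can be pushed up to $1$. If desired, the same monotonicity can instead be phrased as $x^\star \geq x^{\LP} \Rightarrow \min\{1, A_i^\omega x^\star\} \geq \min\{1, A_i^\omega x^{\LP}\}$ for every $i$ and $\omega$, from which one checks directly that $x^\star$ is feasible for~\eqref{PSCP}, hence for~\eqref{IP-IPSCP} and for~\eqref{mp-ipscp-sp}.
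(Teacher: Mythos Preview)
Your proof is correct and coincides with the paper's own argument: the paper takes $\bar{x}=\lceil x^{\LP}\rceil$, which is exactly your vector $x^\star$ (since $\lceil x_j^{\LP}\rceil=0$ on $\N^0$ and $=1$ elsewhere), observes that every Benders feasibility cut has nonnegative coefficients in $x$, and uses $\bar{x}\geq x^{\LP}$ together with this monotonicity to conclude feasibility. The only cosmetic difference is that you define $x^\star$ by cases while the paper writes it as a ceiling.
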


\begin{proof}
As $p_i^\omega > 0$ and $A_i^\omega \in \{0,1\}^n$, each Benders feasibility cut \eqref{feas-cut2} must have the form of $\pi^\top x \geq \pi_0$ where $\pi \in \mathbb{R}_+^n$.
	Therefore, problems \eqref{mp-ipscp} and \eqref{mp-ipscp-sp} can be written as the forms of 
	\begin{align}\label{tmp}
		& \min \left\{ c^\top x \, : \, Cx \geq b, ~x \in \{0,1\}^n\right\},  \\
		& \min \left\{ c^\top x \, : \, Cx \geq b, ~x \in \{0,1\}^n, ~x_j= 0, ~\forall~j \in \N^0,~x_j = 1,~\forall~j \in \N^1\right\}, \label{tmp2}
	\end{align}
	with a nonnegative constraint matrix $C\geq 0$.
	Since $x^{\LP}$ is a feasible solution of the \LP relaxation of problem \eqref{mp-ipscp} (or equivalently, problem \eqref{tmp}), $Cx^{\LP} \geq b$ and $0 \leq x^{\LP} \leq 1$ must hold.
	Letting $\bar{x}= \lceil x^{\LP}\rceil$, then $\bar{x} \in \{0,1\}^n$ and $C\bar{x}\geq Cx^{\LP} \geq b$; and by $x_j^{\LP} =0$ for $j \in \N^0$ and $x_j^{\LP}=1$ for $j \in \N^1$, we must have $\bar{x}_j=0$ for $j \in \N^0$ and $\bar{x}_j=1$ for  $j \in \N^1$.
	This implies that $\bar{x}$ is a feasible solution of problem \eqref{tmp2}, i.e., problem \eqref{mp-ipscp-sp}.
	The proof is complete.
\end{proof}

\noindent \rev{\cref{remark:heur} implies that (i) problem \eqref{mp-ipscp} is feasible if and only if its \LP relaxation is feasible; and (ii) the \RENS procedure}
enjoys a favorable feature, that is, it will always return a feasible solution of the original problem \eqref{mp-ipscp}.
Indeed, as will be demonstrated in \cref{subsec:techniques}, a high-quality feasible solution of problem \eqref{mp-ipscp} is usually identified by this heuristic procedure.

Although a high-quality feasible solution can potentially be found by the above heuristic procedure, our preliminary experiments showed that the computational effort spent in solving the restricted problem \eqref{mp-ipscp-sp} can be large, especially when $\N^0$ and  $\N^1$ are small (as the problem size is large).
To save the computational effort, we enlarge the subsets $\N^0$ and $\N^1$ as
\begin{equation}
	\label{N011}
	\N^0 = \left \{j \in  \setN\, :\, x_j^{\text{\LP}} \leq \theta \right\} \; \text{and}\; \N^1  = \left \{j \in  \setN\, :\, x_j^{\text{\LP}} \geq 1-\theta \right\}, 
\end{equation}
where $\theta> 0$ is a predefined value.
The larger the $\theta$, the larger the $\N^0$ and $\N^1$ are, and thus the smaller and easier the problem \eqref{mp-ipscp-sp} is.
In our experiments, we set $\theta = 0.01$.

\subsection{\MIR-enhanced Benders feasibility cuts }\label{subsec:validineqs}

As mentioned in previous studies such as \cite{Bodur2016,Rahmaniani2020},  when constructing the Benders cuts, the \BD algorithm does not consider {the constraints (e.g., the integrality constraints) on variables $x$ in the master problem}.
This disadvantage, however, generally leads to weak Benders cuts, resulting in weak \LP relaxations and bad overall performances. 
To overcome this weakness, \cite{Cordeau2019} exploited the integrality requirements of $x$  and applied the {coefficient strengthening} \citep{Savelsbergh1994} to strengthen the Benders feasibility cuts \eqref{feas-cut2}.
Here we explore the use of the \MIR technique \citep{Nemhauser1990,Marchand2001}, which simultaneously takes the integrality and variable bound requirements of $x$ (i.e., $x\in \{0,1\}^n$) into consideration, to derive \MIR-enhanced Benders feasibility cuts (see \cite{Bodur2016} for a discussion on \MIR-enhanced Benders optimality cut in the context of solving two-stage stochastic programming problems).
We show that \MIR-enhanced Benders feasibility cuts dominate the strengthened Benders feasibility cuts in \cite{Cordeau2019}.

\subsubsection{Mixed integer rounding}
To proceed, we introduce the following result on the basic \MIR inequality, which can be found in, e.g., \cite[Proposition 8.5]{Wolsey2021}.
\begin{proposition}\label{prop-basic-MIR}
	Let $b \in \mathbb{R}$ and $\Y = \{(x, y) \in \mathbb{Z}^1 \times \mathbb{R}_+^1 \,:\,x + y \geq b\}$. 
	The basic \MIR inequality 
	$x + \frac{y}{f_b} \geq \lceil b \rceil$
	is valid for $\Y$ where $f_b = b - \lfloor b \rfloor$.
\end{proposition}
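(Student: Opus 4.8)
The plan is to verify directly that the basic \MIR inequality $x + y/f_b \geq \lceil b \rceil$ is satisfied by every point of $\Y$, splitting on the value of the integer variable $x$. First I would dispose of the trivial case $f_b = 0$: then $b$ is an integer, $\lceil b \rceil = b$, and since $y \geq 0$ the inequality $x + y/f_b \geq \lceil b \rceil$ should be read as $x \geq b$ which follows from $x + y \geq b$ and $y \geq 0$; more carefully, the standard convention is that the \MIR procedure is applied only when $f_b > 0$, so I would simply assume $f_b \in (0,1)$ throughout, which is what the statement intends.

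Now take any $(x,y) \in \Y$, so $x \in \mathbb{Z}$, $y \geq 0$, and $x + y \geq b$. The key case split is on whether $x \geq \lceil b \rceil$ or $x \leq \lceil b \rceil - 1 = \lfloor b \rfloor$ (these two ranges cover all integers since $b \notin \mathbb{Z}$). In the first case, $x \geq \lceil b \rceil$ and $y/f_b \geq 0$, so $x + y/f_b \geq \lceil b \rceil$ immediately. In the second case, $x \leq \lfloor b \rfloor$, and from $x + y \geq b$ we get $y \geq b - x \geq b - \lfloor b \rfloor = f_b > 0$. Hence $y/f_b \geq 1$, and therefore $x + y/f_b \geq (b - y) + y/f_b$; it is cleaner to instead write $x + y/f_b \geq x + (b-x)/f_b$ using $y \geq b - x$ and $f_b > 0$, and then bound this below. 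Setting $x = \lfloor b \rfloor - k$ for an integer $k \geq 0$, one computes $x + (b-x)/f_b = \lfloor b \rfloor - k + (f_b + k)/f_b = \lfloor b \rfloor + 1 + k(1/f_b - 1) = \lceil b \rceil + k(1/f_b - 1) \geq \lceil b \rceil$, since $f_b < 1$ implies $1/f_b - 1 > 0$ and $k \geq 0$. This establishes validity in both cases.

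I do not expect a serious obstacle here — the result is classical and the argument is a short case analysis. The only mild subtlety worth stating carefully is the role of the assumption $f_b > 0$ (equivalently $b \notin \mathbb{Z}$): the coefficient $1/f_b$ must be well-defined and, crucially, the monotonicity step uses $1/f_b - 1 > 0$, i.e. $f_b < 1$, which holds automatically since $f_b = b - \lfloor b \rfloor \in [0,1)$. If one wishes to also cover $f_b = 0$, the inequality degenerates and should be interpreted as $x \geq \lceil b \rceil = b$, which again follows from $x + y \geq b$, $y \geq 0$, and integrality of $x$; but since the subsequent application (strengthening the Benders feasibility cuts \eqref{feas-cut2}) only invokes the proposition with fractional right-hand sides, I would keep the statement and proof restricted to the case $f_b > 0$ and simply remark on this. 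For completeness I would also cite \cite[Proposition 8.5]{Wolsey2021} as the source, as the excerpt already does.
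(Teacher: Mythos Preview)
Your case-split argument for $f_b \in (0,1)$ is correct and is precisely the standard textbook proof. The paper itself does not prove this proposition at all: it simply records the result and cites \cite[Proposition~8.5]{Wolsey2021}, so your direct verification supplies what the paper merely invokes.

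One correction to your side remarks on the degenerate case $f_b = 0$: the claim that ``$x \geq b$ follows from $x + y \geq b$, $y \geq 0$, and integrality of $x$'' is false (take $b=2$, $x=1$, $y=1$). When $b \in \mathbb{Z}$ the coefficient $1/f_b$ is undefined and the \MIR procedure yields nothing beyond the original constraint $x+y \geq b$; there is no valid reading that produces $x \geq b$. Since you already (correctly) propose to restrict the statement to $f_b > 0$, this does not affect the main argument, but the parenthetical discussion of the integer-$b$ case should be dropped or amended.
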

\noindent Using \cref{prop-basic-MIR}, we can derive the \MIR inequality as follows. For completeness, a proof is provided in Section 1 of the online supplement \footnote{The online supplement is available at \url{https://drive.google.com/file/d/1Cu_BmYsw6tmppf0yxCaUwBbt2ru83MzK/view?usp=sharing}.}.
\begin{proposition}
	\label{pro:mir-1}
	Consider the set $\X = \left\{x \in \{0,1\}^n\, :\, \sum_{j \in  \setN}c_{j} x_j \geq b\right\}$.
	Let $(\L,\U)$ be a partition of $ \setN$ and $\delta > 0$.
	The following \MIR inequality 
	\begin{equation}
		\label{CMIRineq}
		\sum_{j \in \L} G\left(\frac{c_j}{\delta}\right) x_j + 	\sum_{j \in \U} G\left(-\frac{c_j}{\delta}\right) (1-x_j) \geq \left \lceil \beta \right\rceil 
	\end{equation}
	is valid for $\X$, where
	\begin{equation}
		\label{gdef}
		\begin{aligned}
			&  \beta =  \frac{b-\sum_{j \in \U}c_j}{\delta},~~G(d) =  \lfloor d \rfloor+\min\left\{\frac{f_d}{f_{\beta}}, 1\right\}, \\
			& f_\beta= \beta - \lfloor \beta\rfloor, \quad f_d = d -  \lfloor d\rfloor.
		\end{aligned}
	\end{equation}
\end{proposition}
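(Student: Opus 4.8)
The plan is to derive inequality \eqref{CMIRineq} from the basic MIR inequality of \cref{prop-basic-MIR} by a standard two-step complementation-and-scaling argument. First I would handle the variables in $\U$ by complementing them: for $j \in \U$, substitute $x_j = 1 - (1-x_j)$ in the defining inequality $\sum_{j \in \setN} c_j x_j \geq b$ of $\X$, which yields $\sum_{j \in \L} c_j x_j - \sum_{j \in \U} c_j (1 - x_j) \geq b - \sum_{j \in \U} c_j$. Writing $\bar{x}_j = 1 - x_j \in \{0,1\}$ for $j \in \U$, this is a single linear inequality in $0$-$1$ variables with right-hand side $b - \sum_{j \in \U} c_j$, and dividing through by $\delta > 0$ gives $\sum_{j \in \L} \frac{c_j}{\delta} x_j + \sum_{j \in \U} \left(-\frac{c_j}{\delta}\right) \bar{x}_j \geq \beta$, with $\beta$ as defined in \eqref{gdef}.

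The second step is to apply MIR to this scaled, complemented inequality. I would introduce, for each $0$-$1$ variable (call its coefficient $d$ and the variable $w \in \{0,1\}$), the split $d\, w = \lfloor d \rfloor w + f_d\, w$ and bound $f_d\, w \leq f_d\, w$ trivially, then collect an auxiliary continuous surplus variable $y := \sum (\text{fractional parts}) \cdot w \geq 0$ so that the inequality reads $(\text{integer combination}) + y \geq \beta$. Applying \cref{prop-basic-MIR} with $b := \beta$ gives $(\text{integer combination}) + \frac{y}{f_\beta} \geq \lceil \beta \rceil$, and then re-expanding $y$ and grouping the coefficient of each $w$ recovers exactly $\lfloor d \rfloor + \min\{f_d / f_\beta, 1\} = G(d)$ as the coefficient — here the $\min\{\cdot,1\}$ appears because one never wants a coefficient on a $0$-$1$ variable exceeding $\lceil d \rceil$, so the MIR coefficient is capped (equivalently, one uses $f_d\, w \leq \min\{f_d, f_\beta\} w + \ldots$ more carefully, or simply observes $G(d) \leq \lceil d \rceil$ and that replacing $G(d)$ by the larger value still keeps validity). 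Substituting back $d = c_j/\delta$, $w = x_j$ for $j \in \L$ and $d = -c_j/\delta$, $w = 1 - x_j$ for $j \in \U$ yields \eqref{CMIRineq}.

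The one technical point that needs care — and which I expect to be the main obstacle in writing the argument cleanly — is the truncation encoded by the $\min\{f_d/f_\beta, 1\}$ term: the raw MIR inequality on the continuous-relaxation form produces the coefficient $\lfloor d \rfloor + f_d/f_\beta$, which can exceed $\lceil d \rceil$ when $f_d > f_\beta$, and one must justify that it is legitimate (indeed tighter for a $0$-$1$ variable, and still valid) to replace it by $\min\{f_d/f_\beta,1\}$. This is exactly the standard "MIR with bounded variables" refinement; the justification is that for $w \in \{0,1\}$ the term $\lceil d \rceil w$ always dominates $(\lfloor d \rfloor + f_d/f_\beta) w$ in the direction of the inequality when $f_d/f_\beta \geq 1$, since the inequality is of $\geq$ type and all we did was strengthen the left-hand side. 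Since the excerpt defers the full proof to the online supplement, in the main text I would state that \eqref{CMIRineq} follows from \cref{prop-basic-MIR} by complementing the variables in $\U$, scaling by $\delta$, and applying the basic MIR inequality together with the variable-bound strengthening, and point the reader to Section 1 of the online supplement for the routine verification.
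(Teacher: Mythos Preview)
Your overall outline---complement the variables in $\U$, divide by $\delta$, and then apply the basic MIR inequality of \cref{prop-basic-MIR}---is the standard CMIR derivation and is exactly the route the paper takes (the detailed verification is deferred to the online supplement, but the paper explicitly says the inequality is obtained from \cref{prop-basic-MIR}).

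However, your handling of the truncation $\min\{f_d/f_\beta,1\}$ has a genuine gap, and the argument you sketch is in the wrong direction. After the raw MIR step you obtain the valid inequality $\sum_j\bigl(\lfloor d_j\rfloor+f_{d_j}/f_\beta\bigr)w_j\ge\lceil\beta\rceil$. You then want to replace each coefficient by the \emph{smaller} value $G(d_j)=\lfloor d_j\rfloor+\min\{f_{d_j}/f_\beta,1\}$. In a $\ge$-inequality, decreasing a left-hand-side coefficient \emph{strengthens} the inequality; validity of the stronger inequality does \emph{not} follow from validity of the weaker one. Your sentence ``$\lceil d\rceil w$ always dominates $(\lfloor d\rfloor+f_d/f_\beta)w$\ldots all we did was strengthen the left-hand side'' has the inequality the wrong way around (when $f_d>f_\beta$ one has $\lfloor d\rfloor+f_d/f_\beta>\lceil d\rceil$), and in any case strengthening requires proof, not observation.

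The correct fix is to do the case split \emph{before} applying MIR, not after. Partition the indices into $T=\{j:f_{d_j}>f_\beta\}$ and its complement. For $j\notin T$ write $d_jw_j=\lfloor d_j\rfloor w_j+f_{d_j}w_j$; for $j\in T$ write $d_jw_j=\lceil d_j\rceil w_j-(1-f_{d_j})w_j$. Setting $X=\sum_{j\notin T}\lfloor d_j\rfloor w_j+\sum_{j\in T}\lceil d_j\rceil w_j\in\mathbb{Z}$, $y=\sum_{j\notin T}f_{d_j}w_j\ge 0$, and $s=\sum_{j\in T}(1-f_{d_j})w_j\ge 0$, the scaled, complemented inequality reads $X+y-s\ge\beta$, hence $X+y\ge\beta$. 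Now \cref{prop-basic-MIR} gives $X+y/f_\beta\ge\lceil\beta\rceil$, and substituting back yields coefficient $\lfloor d_j\rfloor+f_{d_j}/f_\beta$ for $j\notin T$ and $\lceil d_j\rceil=\lfloor d_j\rfloor+1$ for $j\in T$, i.e.\ exactly $G(d_j)$ in both cases. Undoing the complementation recovers \eqref{CMIRineq}.
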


To derive the \MIR-enhanced Benders feasibility cuts for formulation \eqref{mp-ipscp}, we can present the Benders feasibility cut \eqref{feas-cut2} as $\sum_{j \in \setN}c_{j} x_j \geq b$ where
\begin{equation}
	\label{abdef}
	c_j = \sum_{\omega \in \setS , ~A_i^\omega \bar{x} \leq 1}p_i^\omega {a}_{ij}^\omega, ~j \in  \setN,~\text{and}
	~b=1 - \epsilon_i - \sum_{\omega \in \setS, ~A_i^\omega \bar{x} > 1} p_i^\omega,
\end{equation}
select the partition $(\L, \U)$ of $[n]$ and parameter $\delta>0$, and apply \cref{pro:mir-1} to construct the resultant inequality \eqref{CMIRineq}.
In order to possibly find an inequality violated by a given vector $\bar{x} \in [0,1]^n$, we choose the partition $(\L, \U)$ of $[n]$ and parameter $\delta>0$ using the following heuristic procedure.
The partition $(\L, \U)$ of $[n]$ is set to $\L =  \left\{j \in  \setN\, :\, \bar{x}_j < \frac{1}{2}\right\}$ and $\U= \left\{ j \in  \setN\, :\, \bar{x}_j \geq \frac{1}{2} \right\}$.
As for parameter $\delta$, we take the values in $\F=\left\{ |c_j|\, : \, c_j \neq 0, ~0 < \bar{x}_j < 1, ~j \in  \setN \right\}$.
We construct the \MIR inequalities for all $\delta \in \F$, and choose the one with the greatest violation. 
Here the violation of an inequality $\sum_{j \in  \setN}\pi_j x_j \geq \pi_0$ at a point $\bar{x}$ is defined as $\frac{\max\left\{\pi_0 -\sum_{j\in  \setN} \pi_j \bar{x}_j,\, 0\right\}}{||\pi||}$, where $||\cdot||$ denotes the Euclidean norm.
A similar heuristic  procedure has been used by \cite{Marchand2001} to construct violated $\leq$-\MIR inequalities for the set $\left\{x \in \mathbb{R}^p \times \mathbb{Z}^{n-p} \, :\, \sum_{j \in  \setN}c_{j} x_j \leq b,~\ell \leq x \leq u\right\}$.

\subsubsection{Comparison with the strengthened Benders feasibility cut in \cite{Cordeau2019}}

Next, we illustrate the strength of the \MIR-enhanced Benders feasibility cut \eqref{CMIRineq} for formulation \eqref{mp-ipscp} by comparing it with the strengthened Benders feasibility cut in \cite{Cordeau2019}.

We first introduce the strengthened Benders feasibility cut.
Without loss of generality, we assume $b  > 0$ where $b$ is defined in \eqref{abdef}, as otherwise, $\X=\{0,1\}^n$ and no violated inequality can be derived from $\X$. 
In addition, from the definitions of $c_j$ and $b$ in \eqref{abdef}, $p_i^\omega > 0$, $\sum_{\omega\in [s]}p_i^\omega=1$, $\epsilon_i >0$, and $a_{ij}^\omega \in \{0,1\}$, we must have 
\begin{observation}\label{abobser}
	Let $c_j $ and $b$ be defined as in \eqref{abdef}. Then $0 \leq c_j \leq 1$ for all $j \in [n]$ and $0 < b < 1$.
\end{observation}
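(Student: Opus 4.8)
The claim to prove is \textbf{Observation \ref{abobser}}: with $c_j$ and $b$ defined as in \eqref{abdef}, we have $0 \le c_j \le 1$ for all $j \in [n]$ and $0 < b < 1$.

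\textbf{Plan.} The proof is a direct computation from the definitions, using only the elementary facts recorded just before the statement: $p_i^\omega > 0$, $\sum_{\omega \in [s]} p_i^\omega = 1$, $\epsilon_i \in (0,1)$, $a_{ij}^\omega \in \{0,1\}$, and the standing assumption $b > 0$. I would organize it as two short bounds.

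\textbf{Step 1: bounding $c_j$.} Fix $j \in [n]$. Since $c_j = \sum_{\omega \in [s],\, A_i^\omega \bar x \le 1} p_i^\omega a_{ij}^\omega$ is a sum of nonnegative terms (each $p_i^\omega > 0$ and $a_{ij}^\omega \in \{0,1\}$), we immediately get $c_j \ge 0$. For the upper bound, since each $a_{ij}^\omega \le 1$ and the sum ranges over a subset of $[s]$, we have $c_j \le \sum_{\omega \in [s],\, A_i^\omega \bar x \le 1} p_i^\omega \le \sum_{\omega \in [s]} p_i^\omega = 1$. Hence $0 \le c_j \le 1$.

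\textbf{Step 2: bounding $b$.} Recall $b = 1 - \epsilon_i - \sum_{\omega \in [s],\, A_i^\omega \bar x > 1} p_i^\omega$. For the upper bound, the subtracted quantity $\epsilon_i + \sum_{\omega:\, A_i^\omega\bar x > 1} p_i^\omega$ is strictly positive (because $\epsilon_i > 0$ and the $p_i^\omega$ are nonnegative), so $b < 1$. The lower bound $b > 0$ is exactly the standing assumption made immediately above the statement (``we assume $b > 0$ \dots as otherwise $\X = \{0,1\}^n$''), so nothing further is needed there. Combining, $0 < b < 1$.

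\textbf{Main obstacle.} Honestly, there is none — this is a bookkeeping lemma whose entire content is unwinding \eqref{abdef} and invoking $\sum_\omega p_i^\omega = 1$ and $\epsilon_i > 0$. The only point that requires a word of care is that the lower bound $b > 0$ is \emph{imposed} as a without-loss-of-generality assumption rather than derived, so the proof should explicitly cite that assumption rather than pretend to prove it; and one should note that the $c_j$ bound does not need the $b>0$ assumption at all. I would keep the write-up to a few lines.
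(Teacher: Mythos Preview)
Your proposal is correct and matches the paper's treatment exactly: the paper does not give a formal proof but simply states that the observation follows from the definitions of $c_j$ and $b$ in \eqref{abdef} together with $p_i^\omega > 0$, $\sum_{\omega\in [s]}p_i^\omega=1$, $\epsilon_i >0$, and $a_{ij}^\omega \in \{0,1\}$, which is precisely the computation you wrote out. Your remark that $b>0$ is the standing assumption rather than a derived bound is also exactly right.
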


\noindent Using the integrality and nonnegativity of variables $x_j$, \cite{Cordeau2019} modified the coefficient $c_j$ to $\min\{c_j,b\}$ in the Benders feasibility cut $\sum_{j\in[n]} c_j x_j \geq b$ and derived the strengthened Benders feasibility cut
\begin{equation}
	\label{sBDcut}
\sum_{j\in [n]} \min \{c_j, b\}x_j \geq b.
\end{equation}
\begin{proposition}\label{pro:eqmir}
	The strengthened Benders feasibility cut \eqref{sBDcut} is equivalent to the \MIR-enhanced Benders feasibility cut \eqref{CMIRineq} with $\L = [n]$, $\U = \varnothing$, and $\delta=1$.
\end{proposition}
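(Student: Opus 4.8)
The plan is to substitute the stated parameters directly into the definitions in \eqref{gdef}, simplify, and read off that the resulting instance of \eqref{CMIRineq} is a positive scalar multiple of \eqref{sBDcut}; \cref{abobser} is what keeps all the floors and fractional parts under control. With $\U = \varnothing$ and $\delta = 1$ one gets $\beta = b$ at once, and since \cref{abobser} guarantees $0 < b < 1$ we have $\lfloor \beta \rfloor = 0$, hence $f_\beta = b \neq 0$ (so the MIR construction is well defined) and $\lceil \beta \rceil = 1$. Thus the right-hand side of \eqref{CMIRineq} collapses to $1$, and because $\L = [n]$ its left-hand side is $\sum_{j \in [n]} G(c_j)\, x_j$.

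The next step is to evaluate $G(c_j)$ for each $j \in [n]$, again using \cref{abobser} that $c_j \in [0,1]$. If $c_j < 1$, then $\lfloor c_j \rfloor = 0$ and $f_{c_j} = c_j$, so $G(c_j) = \min\{c_j/b,\, 1\}$; if $c_j = 1$, then $\lfloor c_j \rfloor = 1$ and $f_{c_j} = 0$, so $G(c_j) = 1$, which again equals $\min\{1/b,\, 1\}$ since $b < 1$. Hence $G(c_j) = \min\{c_j/b,\, 1\}$ for every $j$, and \eqref{CMIRineq} becomes $\sum_{j \in [n]} \min\{c_j/b,\, 1\}\, x_j \geq 1$.

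To finish, I would multiply this inequality through by $b > 0$ and use $b\cdot\min\{c_j/b,\,1\} = \min\{c_j,\, b\}$ to recover exactly \eqref{sBDcut}; since multiplying an inequality by a strictly positive constant preserves the set of points it excludes, the two cuts are equivalent, as claimed. The only spot that needs a moment of care is the boundary case $c_j = 1$, where the fractional part vanishes and one must note that the $\lfloor\cdot\rfloor$ term of $G$ contributes the value that the $\min$ term would otherwise have supplied; apart from that, the argument is a routine substitution, so I do not expect a genuine obstacle.
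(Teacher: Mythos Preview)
Your proposal is correct and follows essentially the same approach as the paper: substitute $\L=[n]$, $\U=\varnothing$, $\delta=1$ into \eqref{gdef}, use \cref{abobser} to get $\lceil\beta\rceil=1$ and $f_\beta=b$, case-split on $c_j=1$ versus $c_j<1$ to obtain $G(c_j)=\min\{c_j/b,1\}$, and then observe that the resulting inequality is \eqref{sBDcut} scaled by $1/b$. Your write-up is in fact a bit more explicit than the paper's (e.g., noting $f_\beta\neq 0$ so the construction is well defined, and spelling out the multiplication by $b$), but the argument is the same.
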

\begin{proof}
	Suppose that $\L = [n]$, $\U = \varnothing$, and $\delta=1$.
	By \eqref{gdef} and \cref{abobser}, we have $G(c_j)= \lfloor c_j \rfloor+\min \left\{\frac{f_{c_j}}{f_b},1\right\}=\lfloor c_j \rfloor + \min \left\{\frac{c_j-\lfloor c_j \rfloor}{b-\lfloor b\rfloor}, 1 \right\}$. 
	If $c_j = 1$, then $G(c_j) = 1= \min \left\{\frac{c_j}{b},1\right\}$; otherwise, $G(c_j) = \min \left\{\frac{c_j}{b},1\right\}$.
	Together with $\lceil\beta\rceil = \lceil b \rceil=1$, this indicates that \eqref{sBDcut} is a scalar multiple of \eqref{CMIRineq} with the scalar being $b$.
As a result, inequalities \eqref{CMIRineq} and \eqref{sBDcut} are equivalent.
\end{proof}

Under certain conditions, the \MIR-enhanced Benders feasibility cut \eqref{CMIRineq} can be stronger than the strengthened Benders feasibility cut \eqref{sBDcut}.
\begin{proposition}
	\label{pro:coef-1}
	Let $\CS=\{ j \in [n]\, : \, c_j \geq b \}$.
	If $0 < \sum_{j \in [n]\backslash\CS} c_j < b$, then the \MIR inequality \eqref{CMIRineq} with $\L =\CS$, $\U = [n]\backslash\CS$, and $\delta=1$ reduces to
	\begin{equation}
		\label{coverineq-1}
		\sum_{j \in  \CS} x_j \geq 1,
	\end{equation}
	which is stronger than the strengthened Benders feasibility cut \eqref{sBDcut}.
\end{proposition}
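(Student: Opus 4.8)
The plan is purely computational: substitute the prescribed data $\L=\CS$, $\U=[n]\backslash\CS$, and $\delta=1$ into \eqref{gdef}, simplify the rounding function $G$ on each index set, read off that the \MIR inequality \eqref{CMIRineq} collapses to \eqref{coverineq-1}, and then dispatch the domination over \eqref{sBDcut} in one line. First I would fix the rounding data. With $\delta=1$, \eqref{gdef} gives $\beta=b-\sum_{j\in[n]\backslash\CS}c_j$, so the hypothesis $0<\sum_{j\in[n]\backslash\CS}c_j<b$ together with \cref{abobser} (which supplies $0<b<1$) yields $0<\beta<b<1$. Hence $\lfloor\beta\rfloor=0$, $f_\beta=\beta$, and $\lceil\beta\rceil=1$, so the right-hand side of \eqref{CMIRineq} equals $1$.

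Next I would simplify the coefficients. For $j\in\CS$ we have $b\le c_j\le1$ by \cref{abobser}; if $c_j=1$ then $G(c_j)=1+0=1$, while if $b\le c_j<1$ then $\lfloor c_j\rfloor=0$ and $f_{c_j}/f_\beta=c_j/\beta>1$ (since $c_j\ge b>\beta$), so again $G(c_j)=0+1=1$. The crux is the index set $\U$: for $j\in\U$ we have $0\le c_j<b<1$, and from $\beta=b-\sum_{k\in\U}c_k\le b-c_j$ we get $c_j\le b-\beta<1-\beta$, i.e.,\ $1-c_j>\beta=f_\beta$. When $c_j>0$ this forces $\lfloor-c_j\rfloor=-1$, $f_{-c_j}=1-c_j$, and $\min\{f_{-c_j}/f_\beta,1\}=1$, so $G(-c_j)=-1+1=0$; when $c_j=0$ trivially $G(0)=0$. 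Thus every $\U$-term of \eqref{CMIRineq} vanishes and every $\CS$-term has coefficient $1$, so \eqref{CMIRineq} reads $\sum_{j\in\CS}x_j\ge\lceil\beta\rceil=1$, which is exactly \eqref{coverineq-1}.

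Finally I would compare \eqref{coverineq-1} with \eqref{sBDcut}. Splitting \eqref{sBDcut} according to whether $c_j\ge b$ (so $\min\{c_j,b\}=b$ for $j\in\CS$) or $c_j<b$ (so $\min\{c_j,b\}=c_j$ for $j\in\U$) and dividing through by $b>0$ rewrites it as $\sum_{j\in\CS}x_j+\sum_{j\in\U}(c_j/b)x_j\ge1$; since $c_j\ge0$ and the cuts are used over $x\in[0,1]^n$, this inequality is implied by \eqref{coverineq-1}, which therefore dominates \eqref{sBDcut}. I do not anticipate any real obstacle; the only point requiring care is the floor/fractional-part bookkeeping for $G(-c_j)$ on $\U$, where the one genuine ingredient is the estimate $c_j\le b-\beta$ (a consequence of $\beta=b-\sum_{k\in\U}c_k$ and $c_k\ge0$ together with $b<1$), which saturates the inner $\min$ at $1$ and cancels $\lfloor-c_j\rfloor=-1$.
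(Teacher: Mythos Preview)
Your argument is correct and follows essentially the same route as the paper's proof: both compute $\beta$, $f_\beta$, and $G(\pm c_j)$ case by case to reduce \eqref{CMIRineq} to \eqref{coverineq-1}, and then observe that \eqref{sBDcut} decomposes as $b\sum_{j\in\CS}x_j+\sum_{j\in\U}c_jx_j\ge b$, which is implied by \eqref{coverineq-1} for $x\ge0$. If anything, your justification of the cancellation on $\U$ (via $c_j\le b-\beta<1-\beta$) is more explicit than the paper's, which simply asserts $G(-c_j)=0$ without spelling out why $(1-c_j)/f_\beta\ge1$.
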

\begin{proof}
	Suppose that $\L = \CS$, $\U = [n]\backslash \CS$, $\delta=1$.
	Then from \eqref{gdef}, \cref{abobser}, and $\sum_{j \in [n]\backslash\CS} c_j < b$, we have $\lceil\beta \rceil= \lceil b - \sum_{j \in \setN\backslash\CS}c_j \rceil =1$ and $f_\beta = b-\sum_{j \in \setN\backslash\CS} c_j$.
	 For $j \in \CS$, if $c_j = 1$, it follows $G(c_j)= \lfloor c_j \rfloor + \min \left\{\frac{f_{c_j}}{f_\beta},1\right\}= 1 $; otherwise, 
	  it also follows $f_{c_j}=c_j$ and $G(c_j)= \lfloor c_j \rfloor + \min \left\{\frac{f_{c_j}}{f_\beta},1\right\}=  \min \left\{\frac{c_j}{b - \sum_{j' \in [n]\backslash\CS}c_{j'}},1\right\}=1$.
	For $j \in [n]\backslash\CS$, if $c_j=0$, it follows $G(-c_j) = \lfloor -c_j \rfloor+\min 
	\left\{\frac{f_{-c_j}}{f_\beta},1\right\}=0$; otherwise, 
	 $f_{-c_j}=1-c_j >0$ and $G(-c_j) = \lfloor -c_j \rfloor+\min 
	\left\{\frac{f_{-c_j}}{f_\beta},1\right\}=-1 +\min \left\{\frac{1-c_j}{b - \sum_{j' \in \setN\backslash\CS}c_{j'}},1\right\}=0$.
	Thus, \eqref{CMIRineq} reduces to \eqref{coverineq-1}.
	By $\sum_{j\in \setN} \min \{c_j, b\}x_j = b \sum_{j \in  \CS} x_j +  \sum_{j \in [n]\backslash\CS} c_jx_j  \geq b \sum_{j \in \CS} x_j$,  \eqref{coverineq-1} must be stronger than \eqref{sBDcut}.
\end{proof}
\cref{pro:eqmir,pro:coef-1} demonstrate that by choosing appropriate $(\L,\U)$ and $\delta$, we can obtain an \MIR-enhanced Benders feasibility cut \eqref{CMIRineq} that is either identical to or stronger than the strengthened Benders feasibility cut \eqref{sBDcut} (under certain conditions).
This shows the potential of the \MIR-enhanced Benders feasibility cut in strengthening the \LP relaxation of formulation \eqref{mp-ipscp}.
In \cref{subsec:techniques}, we will further demonstrate this by numerical experiments.

	\section{Numerical results}
\label{sect:numer}
In this section, we present the computational results to show the effectiveness and efficiency of the proposed \BD algorithm and enhancement techniques for the \PSCP.
To do this, we first perform experiments on small-scale \PSCP instances to demonstrate the advantage of the proposed \BD algorithm over state-of-the-art \MIP solvers.
Then, we present detailed computational results of the proposed \BD algorithm on large-scale \PSCP instances.
Finally, we evaluate the performance impact of the  enhancement techniques developed in \cref{sect:implementation} for the \PSCP \footnote{\rev{In Section 2 of the online supplement, we present the computational results to compare the proposed \BD algorithm with the state-of-the-art approach in \cite{Jiang2022} on the \PSCP with $m=1$.}}.

The proposed \BD algorithm was implemented in Julia 1.7.3 using \CPLEX 20.1.0. 
We set parameters of \CPLEX to run the code in a single-threaded mode, with a time limit of 7200 seconds and a relative \MIP gap tolerance of 0\%.
Unless otherwise specified, other parameters in \CPLEX were set to their default values.
All computational experiments were performed on a cluster of Intel(R) Xeon(R) Gold 6140 \CPU{} @ 2.30GHz computers.

\subsection{Testsets}\label{subsect:testsets}
In our experiments, we construct the \PSCP instances using the $60$ deterministic \SCP instances with up to $500$ rows and $5000$ columns, as considered in  \cite{Fischetti2012}.
These instances are publicly available at the ORLIB library \footnote{\url{http://people.brunel.ac.uk/~mastjjb/jeb/orlib/scpinfo.html}}.
We use two different distributions of random vectors $A_i$ to construct the scenarios (vectors) $A_i^\omega$ for the \PSCP instances. 
In the first case, each $a_{ij}$ is assumed to be an independent Bernoulli random variable \citep{Hwang2004,Fischetti2012}.
Specifically, for a constraint matrix $A$ of a deterministic \SCP instance, each $a_{ij}$, $j \in \supp(A_i)$, has a probability $p_{ij}$ to be disappeared, randomly chosen from $[0, 0.4]$, and each $j \in [n] \backslash \supp(A_i)$ has a probability $1$ to be disappeared.
In the second case, each row $(a_{i1}, \ldots, a_{in})$ is assumed to be conditionally independent, following the Bernoulli mixture distribution considered in \cite{Ahmed2013} (and thus random variables $a_{ij_1}$ and $a_{ij_2}$ can be {correlated}).
In particular, given a finite prior distribution $\{\pi_1, \ldots, \pi_{L}\}$, the conditional probabilities $p_{ij_1\ell}$ and $p_{ij_2\ell}$ (corresponding to the disappearance of columns $j_1$ and $j_2$ in scenario $\ell$) are independent for $j_1,j_2 \in [n]$ with $j_1 \neq j_2$ and $\ell \in [L]$.
In our experiments, $L$ is set to $50$; each $\pi_\ell$ is uniformly chosen from {$[0, 1]$} and normalized such that $\sum_{\ell=1}^L \pi_{\ell}=1$; and similar to the independent case, $p_{ij\ell}$, $j \in \supp(A_i)$, are uniformly chosen from $[0, 0.4]$, and  $p_{ij\ell}$, $j \in [n] \backslash \supp(A_i)$, are set to $1$.

We construct the \PSCP  instances based on the above two independent and correlated distributions of random vectors $A_i$, and the instances are thus referred as to independent and correlated instances, respectively.
The first testset \Tone consists of small-scale instances in which $s$ is set to $100$ and $\epsilon_i$, $i \in [m]$, are set to the common value $\epsilon$, chosen from $\{0.05, 0.1\}$.
The second testset \Ttwo consists of large-scale instances in which $s$ is chosen from $\{1000, 2000\}$ and similarly, $\epsilon_i$, $i \in [m]$, are set to the common value $\epsilon$, chosen from $\{0.025,0.05, 0.1\}$.
In total, there are $240$ and $720$ instances in testsets \Tone and \Ttwo, respectively.
\rev{It deserves to mention that in all constructed instances, condition \eqref{infeascheck} does not hold for all $i \in [m]$, and thus all constructed instances are feasible.}

\subsection{Comparison with state-of-the-art \MIP solvers}
In this subsection, we show the advantage of the proposed \BD algorithm for the \PSCP based on formulation \eqref{mp-ipscp} over the direct use of \CPLEX applied as a black-box \MIP solver to formulation \eqref{IP-IPSCP}.
To do this, we compare the following settings:
\begin{itemize}
	\item \CPX: formulation \eqref{IP-IPSCP} is solved using \CPLEX's  branch-and-cut solver;
	\item  \AUTO: formulation \eqref{IP-IPSCP} is solved using \CPLEX's automatic \BD algorithm (by setting parameter "Benders strategy" to "full");
	\item \SBD:  formulation \eqref{mp-ipscp} is solved using the proposed \BD algorithm in which the {Benders feasibility cuts} are separated for integer and fractional solutions at all nodes of the search tree;
	\item \RBD:  formulation \eqref{mp-ipscp} is solved using the proposed \BD algorithm {in which the {Benders feasibility cuts} are separated for integer solutions at all nodes and fractional solutions at the root node of the search tree}.
\end{itemize}
Notice that (i) in settings \CPX and \AUTO, the integrality constraints \eqref{ip_ipscp_cons3} in formulation \eqref{IP-IPSCP} are (equivalently) relaxed into $0 \leq z_i^\omega \leq 1$ for $i \in  \setM$ and $\omega \in \setS$; and (ii) in settings \SBD and \RBD, the three  enhancement techniques (i.e., initial cuts, the customized \RENS heuristic algorithm, and the \MIR-enhanced Benders feasibility cuts) in \cref{sect:implementation} are all implemented. 

\cref{fig1a,fig1b} give the performance profiles comparing the CPU times and the end gaps (when the time limit was hit) returned by settings \CPX, \AUTO, \SBD, and \RBD.
Detailed statistics of instance-wise computational results can be found in \cref{tab:table1,tab:table2,tab:table3,tab:table4} in the Appendix.
First, from \cref{fig1a,fig1b}, we can conclude that the performances of the proposed \SBD and \RBD are fairly comparable on the small-scale instances in testset \Tone. 
Overall, \RBD seems to perform slightly better than \SBD on easy instances.
Second, we observe from the two figures that the proposed \SBD and \RBD outperform \CPX by at least one order of magnitude. 
In particular, as demonstrated in \cref{fig1a}, almost all instances can be solved by \SBD and \RBD within the time limit, while less than 50\% of the instances can be solved by \CPX.
For the unsolved instances, the end gap returned by \CPX is also much larger than those returned by \SBD and \RBD, as shown in \cref{fig1b}. 
Finally, we observe that \AUTO also performs fairly well.
In particular, \AUTO can also solve most instances to optimality within the time limit.
This may be explained by the reason that 
the main difference between our proposed \SBD/\RBD and \AUTO lies in the way to solve the Benders subproblem \eqref{sp-ipscp-1} to obtain the {Benders feasibility cut} \eqref{inifeacut_ipscp}, and according to \cite{Bonami2020}, the Benders subproblem \eqref{sp-ipscp-1} may also be efficiently solved by \AUTO with the technique to handle the so-called \emph{generalized bound constraints} \eqref{ip_ipscp_cons1}.
Nevertheless, we observe that \AUTO is outperformed by the proposed \SBD and \RBD, especially for the easy instances. 
In particular, more than $50\%$ of the instances can be solved within $10$ seconds by the proposed \SBD and \RBD while only about $35\%$ of the instances can be solved within 10 seconds by \AUTO.

\begin{figure}[t]
		\centering 
		\subfloat[]{\label{fig1a}\includegraphics[width=0.47\textwidth]{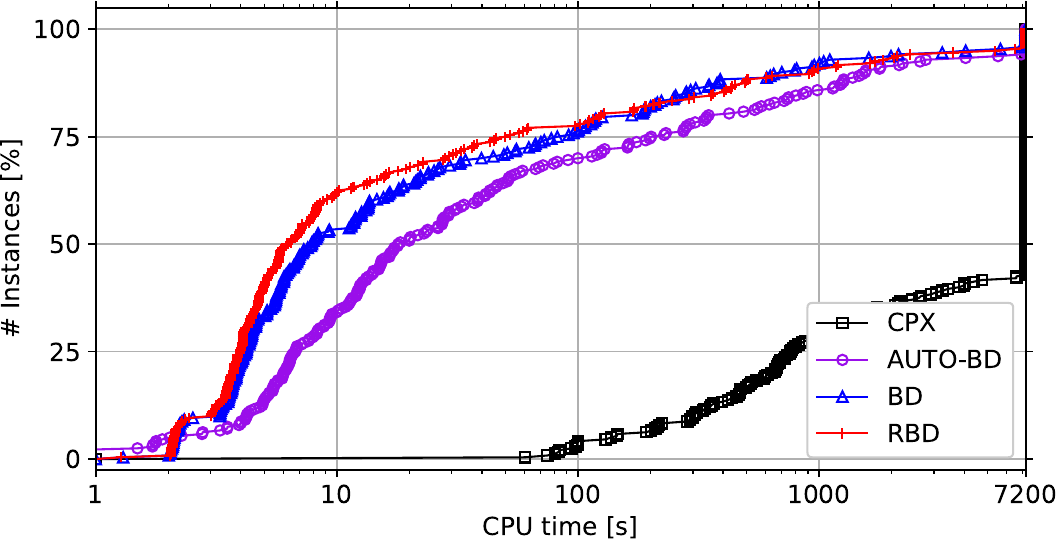}}
		\qquad
		\subfloat[]{\label{fig1b}\includegraphics[width=0.47\textwidth]{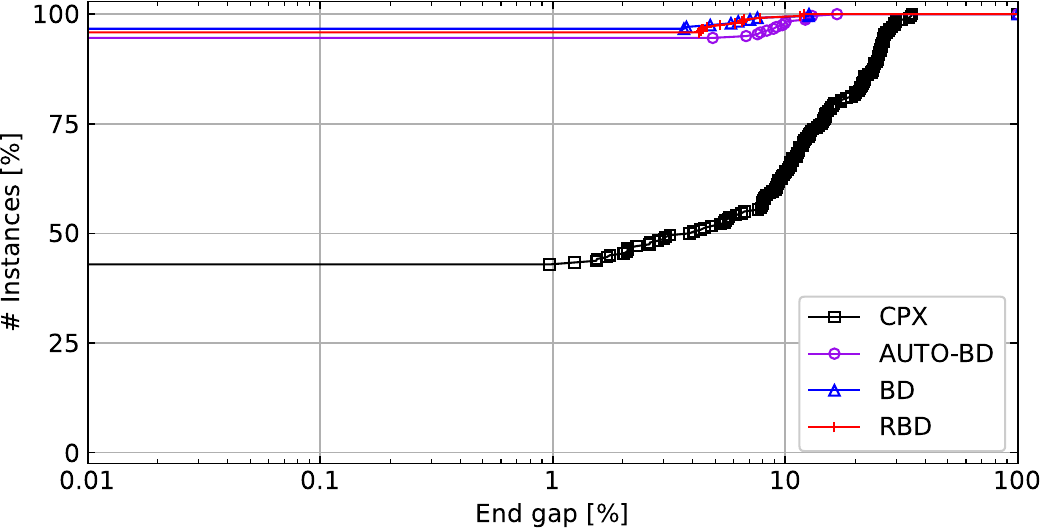}}
		\caption{Performance profiles of the CPU times and end gaps on the instances in testset \Tone.}
		\label{fig1}
\end{figure}

\subsection{Computational results on large-scale instances}
We now evaluate the performances of the proposed \SBD and \RBD on the large-scale instances in testset \Ttwo. 
\cref{fig2} summarizes the computational results, grouped by $\epsilon$, $s$, and the type of instances (i.e., independent or correlated instances).
Detailed statistics of instance-wise computational results can be found in Tables \rev{2-13} of the online supplement.

\begin{figure}[h!]
		\centering 
		\subfloat[]{\label{fig2a}\includegraphics[width=0.47\textwidth]{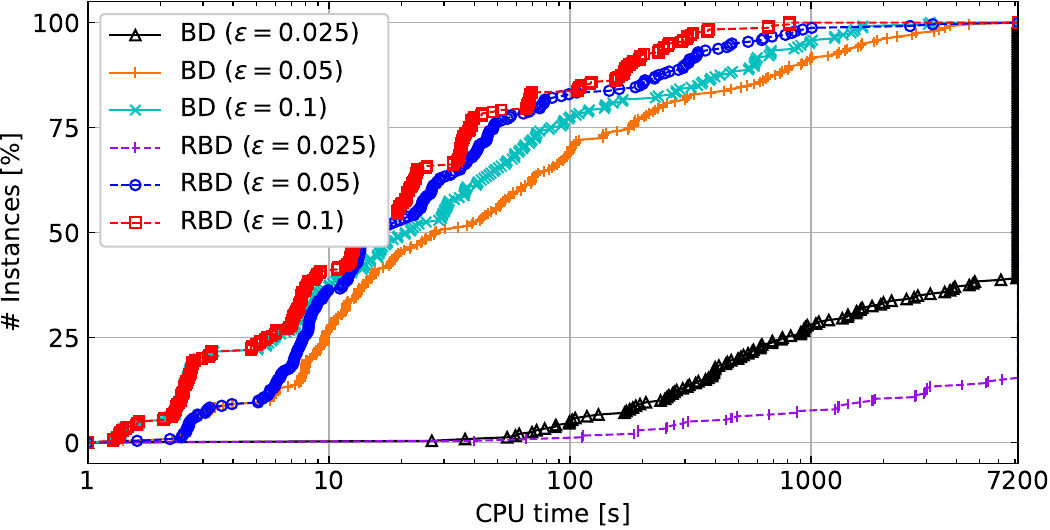}}
		\qquad
		\subfloat[]{\label{fig2b}\includegraphics[width=0.47\textwidth]{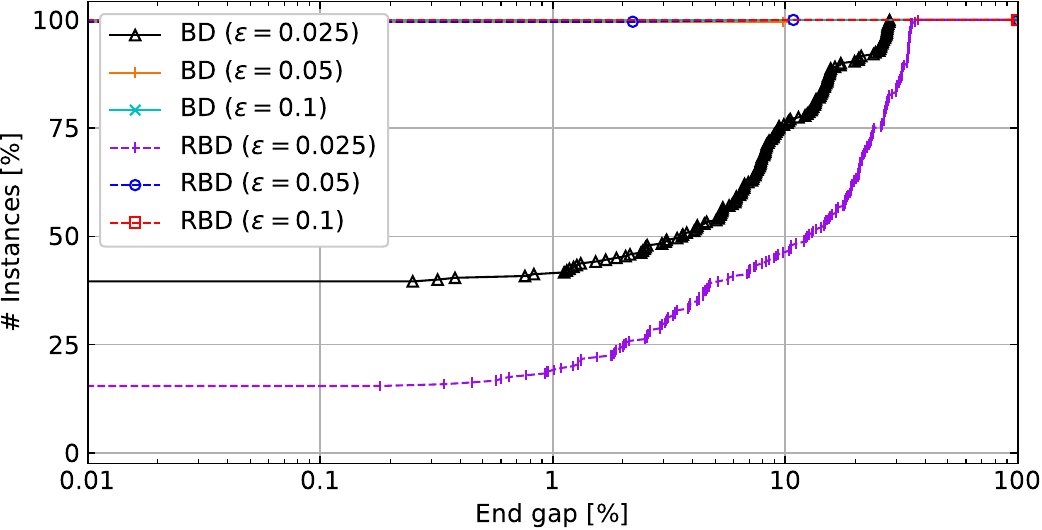}}
		\qquad
		\subfloat[]{\label{fig3a}\includegraphics[width=0.47\textwidth]{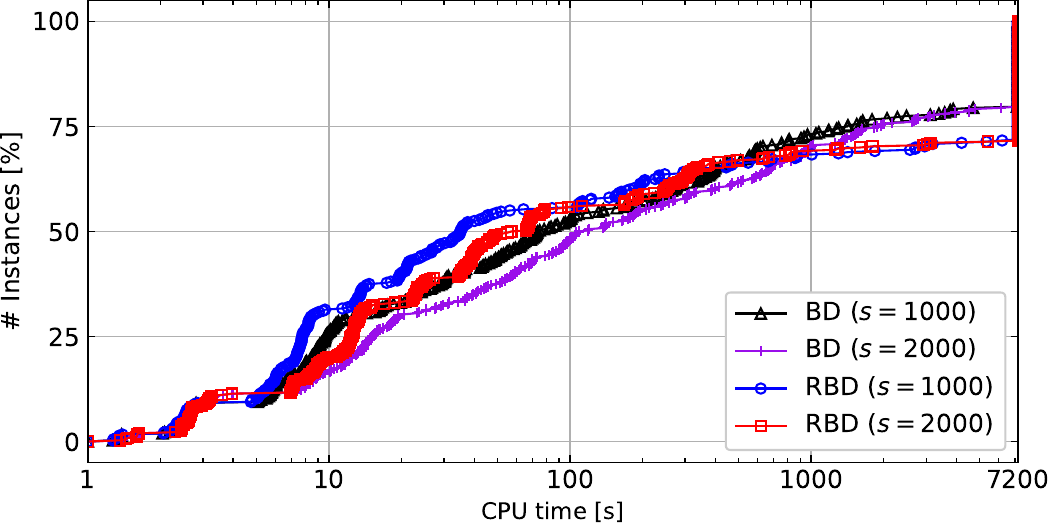}}
		\qquad
		\subfloat[]{\label{fig3b}\includegraphics[width=0.47\textwidth]{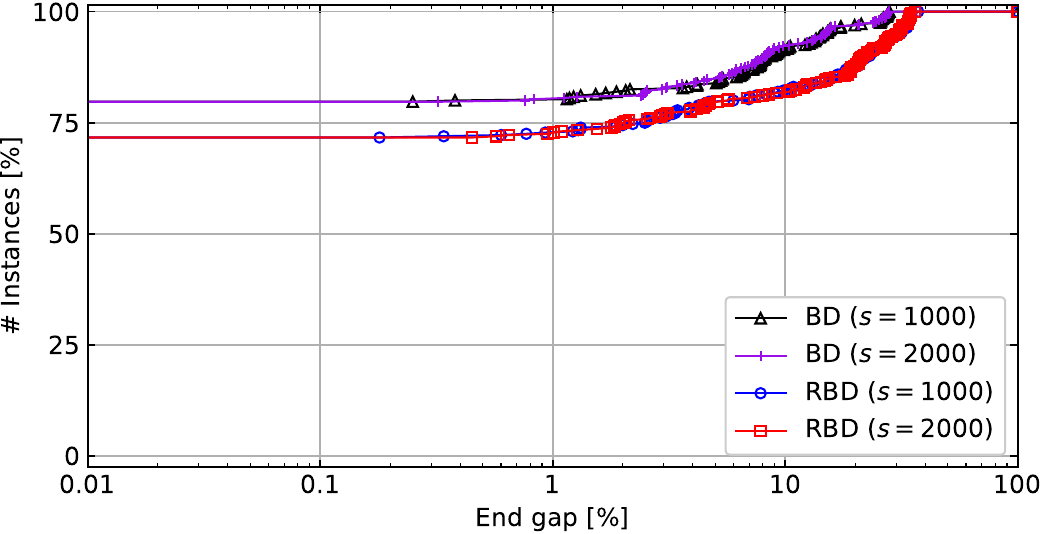}}
		\qquad
		\subfloat[]{\label{fig4a}\includegraphics[width=0.47\textwidth]{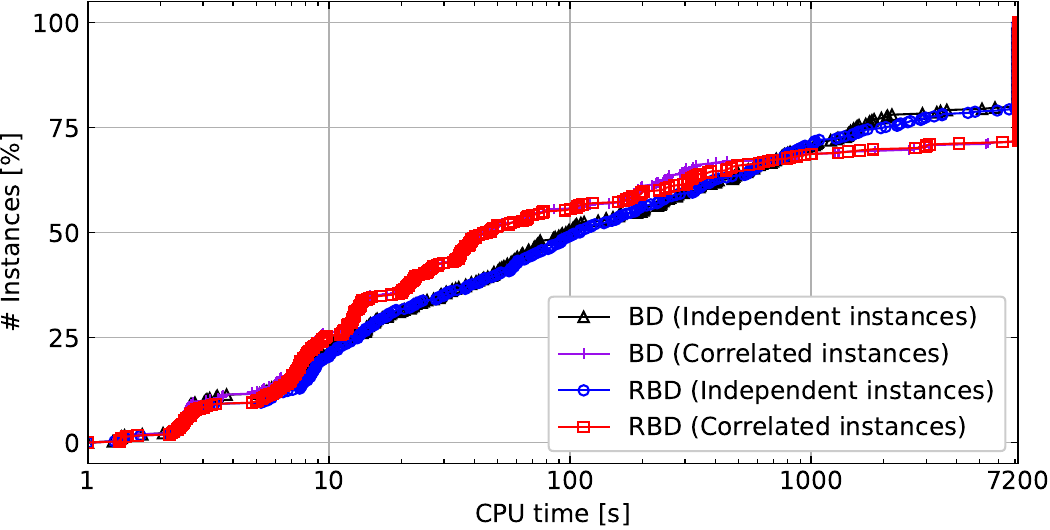}}
		\qquad
		\subfloat[]{\label{fig4b}\includegraphics[width=0.47\textwidth]{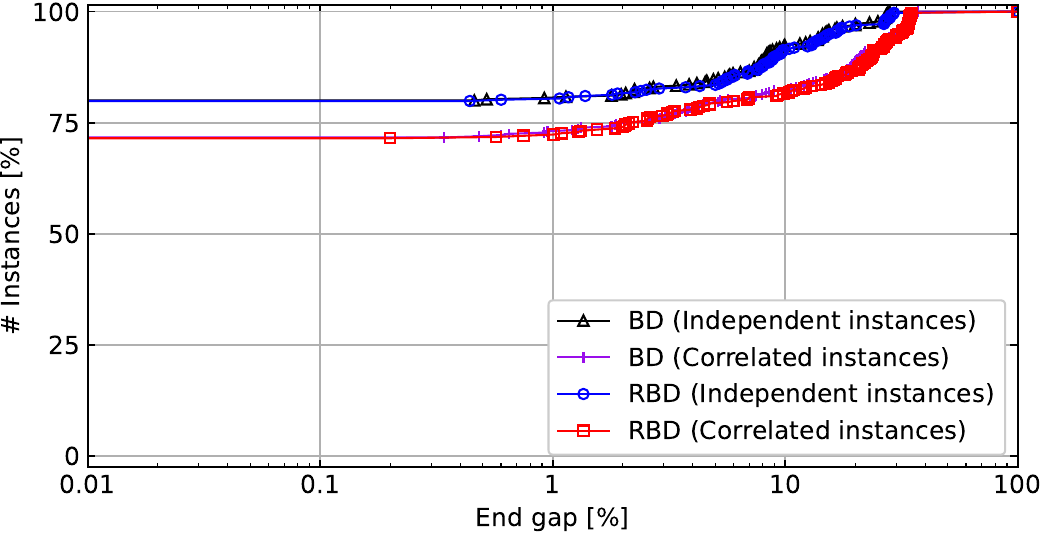}}
		\vspace*{-0.4cm}
		\caption{Performance profiles of the \CPU times and end gaps on the instances in testset \Ttwo.
		 (a) and (b): grouped by $\epsilon$; (c) and (d): grouped by $s$; (e) and (f): grouped by independent and correlated instances.}
	 	\vspace*{-0.4cm}
		\label{fig2}
\end{figure}

\begin{figure}[t]
	\centering
	\subfloat[]{\label{rootgapepsilon}\includegraphics[width=0.47\textwidth]{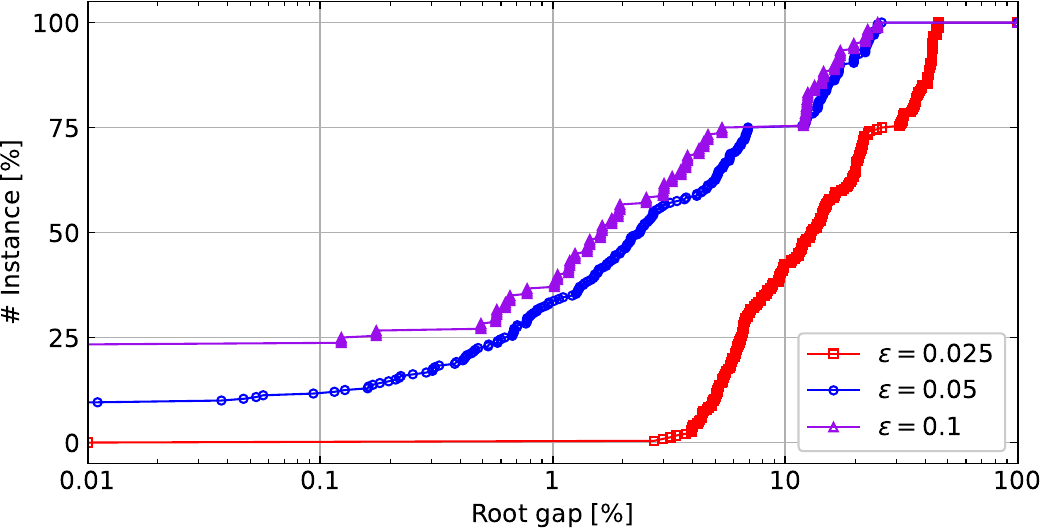}}
	\qquad
	\subfloat[]{\label{figupper}\includegraphics[width=0.47\textwidth]{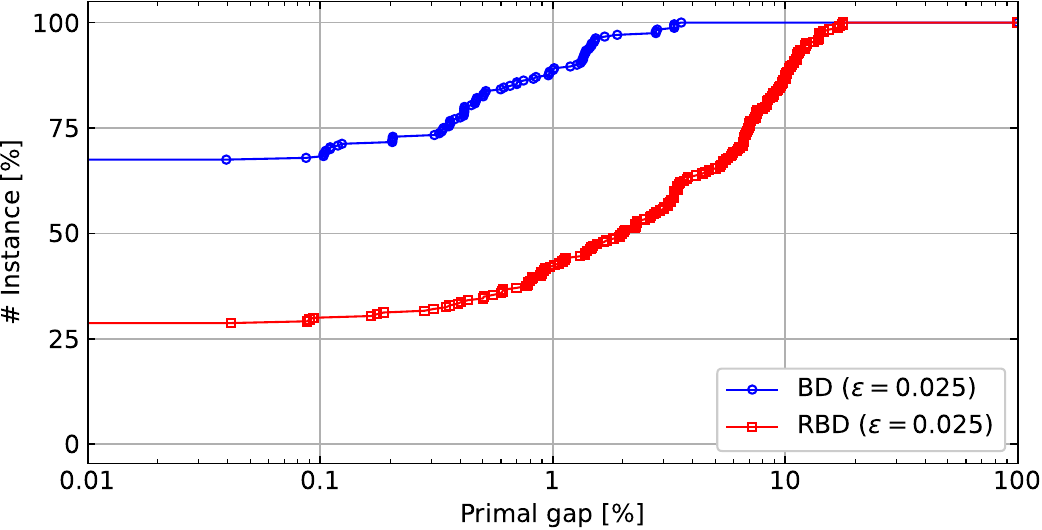}}
	\vspace*{-0.4cm}
	\caption{(a) Performance profiles of the root gaps on the instances in testset \Ttwo; (b) Performance profiles of the primal gaps on the instances with $\epsilon=0.025$ in testset \Ttwo.}
	\vspace*{-0.4cm}
\end{figure}

First, we can observe from \cref{fig2a,fig2b} that the \PSCP instances with $\epsilon=0.025$ are much more difficult to be solved by \SBD and \RBD compared with those with $\epsilon = 0.05, 0.1$.
Specifically, for instances with $\epsilon = 0.05, 0.1$, \RBD and \SBD can solve all of them to optimality while for instances with $\epsilon = 0.025$, \RBD and \SBD were only able to solve about $15\%$ and {$35\%$} of the instances to optimality, respectively.
The reason can be explained as follows. 
The \PSCP instances with $\epsilon=0.025$ are much more constrained than those with $\epsilon = 0.05, 0.1$, which not only {leads to a weak \LP bound} at the root node but also {makes \SBD and \RBD challenging to find optimal or near-optimal solutions}.
To justify this, we present the performance profiles of the root gaps of the \LP bound at the root node and the primal gaps of the solution (for instances with $\epsilon=0.025$) returned by \SBD and \RBD in \cref{rootgapepsilon,figupper}, respectively. 
The root gap and primal gap of an instance are computed  as $100 \times \frac{o^*-o_{\text{root}}}{o^*}$ and $100 \times \frac{o^F - o^*}{o^*}$, respectively, where 
$o^*$, $o_{\text{root}}$, and $o^{F}$ are the optimal value or the best incumbent \footnote{For an unsolved instance by \SBD and \RBD with a 7200 seconds time limit, the best incumbent $o^{*}$ is obtained by solving it using \SBD with a 36000 seconds time limit. Note that \SBD performs better than \RBD on hard instances with $\epsilon=0.025$, as shown in \cref{fig2b}.}, the LP bound obtained at the root node, and the objective value of the feasible solution returned by \SBD/\RBD, respectively.
As shown in \cref{rootgapepsilon,figupper}, (i) the root gaps of instances with $\epsilon=0.025$ are much larger than those of instances with $\epsilon=0.05,0.1$; and (ii) the primal gaps of instances with  $\epsilon=0.025$ are \rev{fairly large}, which confirms our statement.
It is worthy noting that the different quality of the \LP bounds also makes the different performance behavior of the proposed \SBD and \RBD on instances with different $\epsilon$.
Overall, we observe that for instances with $\epsilon = 0.05,0.1$, \RBD performs better than \SBD, while for those with $\epsilon=0.025$, \SBD performs much better.
This is reasonable as instances with $\epsilon=0.025$ generally associate with a weak \LP bound (as shown in \cref{rootgapepsilon}), and the strategy of adding cuts for fractional solutions encountered in the whole search tree in \SBD can strengthen the lower bound more quickly, and thus makes the performance of \SBD better than that of \RBD.
For instances with $\epsilon=0.05, 0.1$ that generally associate with a relatively good \LP bound, the same strategy cannot compensate for the additional overhead of making a large \LP relaxation (as more cuts are added), and thus makes the performance of \SBD worse than that of \RBD.

Next, we compare the results of the \PSCP instances with different numbers of scenarios $s$. 
\cref{fig3a,fig3b} show that solving instances with $s=2000$ by \SBD and \RBD is only slightly more difficult  than solving those with $s=1000$.
This is due to the reasons that (i) for the \PSCP instances with different $s$, the numbers of variables in \eqref{mp-ipscp} are identical; and (ii) for the proposed \SBD or \RBD, the main difference in solving the \PSCP instances with different $s$ lies in how fast we obtain the Benders feasibility cuts \eqref{feas-cut2}, and
as demonstrated in \cref{closed_formula}, the {Benders feasibility cuts} \eqref{feas-cut2} can be computed by an efficient polynomial-time algorithm.
Indeed, in our experiments, we observed that the overhead spent in the computations of the Benders feasibility cuts \eqref{feas-cut2} is not large, especially for the \RBD.
These results highlight the scalability of the proposed \SBD and \RBD in solving large-scale \PSCP instances (with large numbers of scenarios).

Finally, we observe from \cref{fig4a,fig4b} that the performances of \SBD and \RBD on the independent and correlated \PSCP instances are quite similar.
This is reasonable knowing that (i) our proposed \SBD and \RBD for the \PSCP do not require a specific distribution of the random data $A_i$; and (ii) the independent and correlated distributions of $A_i$ that are used to construct the scenarios $A_i^\omega$ are quite similar (indeed, the independent distribution of $A_i$ can be seen as a special case of the correlated distribution of $A_i$ with $L=1$).

\subsection{Performance effects of the three enhancement techniques in \cref{sect:implementation}}
\label{subsec:techniques}

In this subsection, we evaluate the performance effects of the three enhancement techniques (i.e., initial cuts, the customized \RENS heuristic algorithm, and the \MIR-enhanced Benders feasibility cuts)
in \cref{sect:implementation} for solving the \PSCP.
To do this, we compare the following variants of setting \SBD:
\begin{itemize}
	\item \BasicSBD: the basic version of setting \SBD  in which none of the three techniques is implemented;
	\item \BasicSBDI: the setting \BasicSBD together with the initial cuts proposed in \cref{subsec:initialcuts};
	\item \BasicSBDIH: the setting \BasicSBDI together with the customized \RENS heuristic algorithm proposed in \cref{subsec:primalheuristic}; 
	\item  \BasicSBDIHV: the setting \BasicSBDIH together with the \MIR-enhanced Benders feasibility cuts proposed in \cref{subsec:validineqs}, i.e., setting \SBD.
\end{itemize}
We do not compare the corresponding variants of \RBD as our preliminary experiments showed that the obtained results are similar to those presented in this subsection.
\cref{fig5} displays the performance profiles of the \CPU times, end gaps, root gap\roundtwo{s}, and number of explored nodes on the instances in testset \Ttwo returned by the four settings.
Detailed statistics of instance-wise computational results can be found in Tables \rev{14-25} of the online supplement.
\begin{figure}[t]
		\centering 
		\subfloat[]{\label{fig5a}\includegraphics[width=0.47\textwidth]{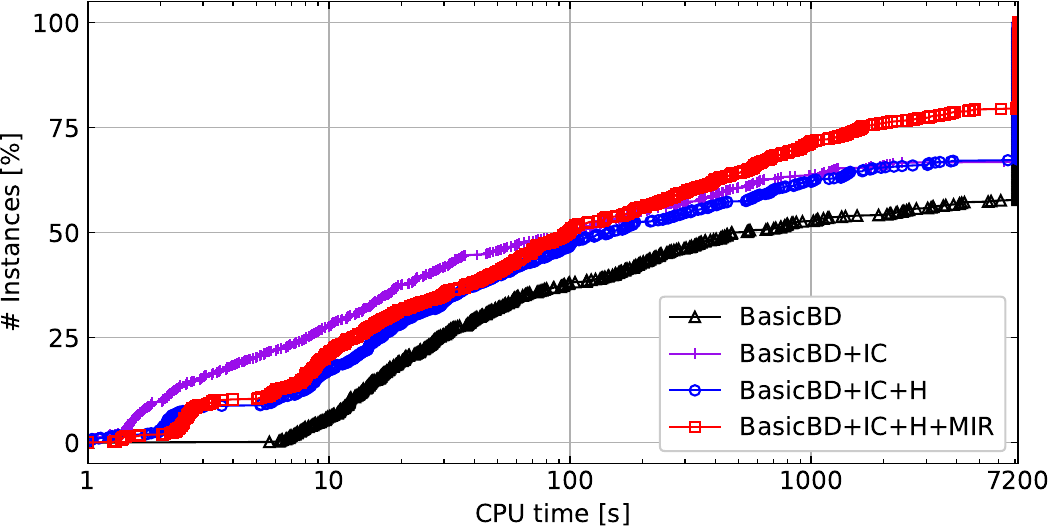}}
		\quad
		\subfloat[]{\label{fig5b}\includegraphics[width=0.47\textwidth]{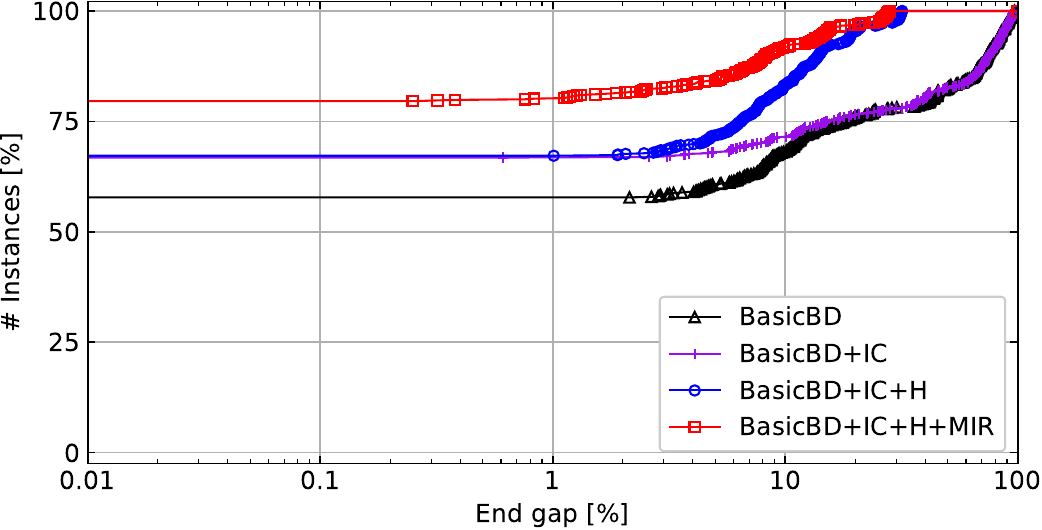}}
		\quad
		\subfloat[]{\label{InitialCuts}\includegraphics[width=0.47\textwidth]{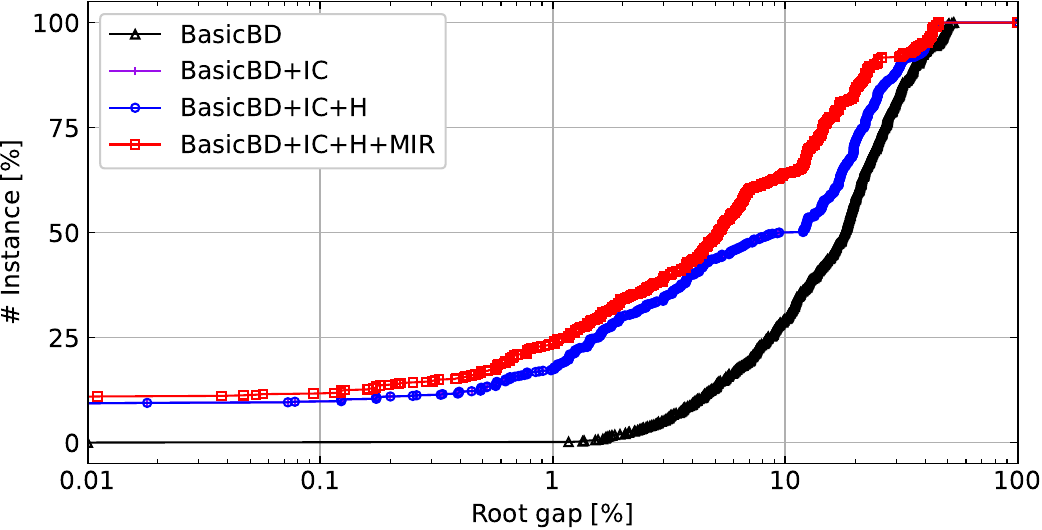}}
		\quad
		\subfloat[]{\label{ICNode}\includegraphics[width=0.47\textwidth]{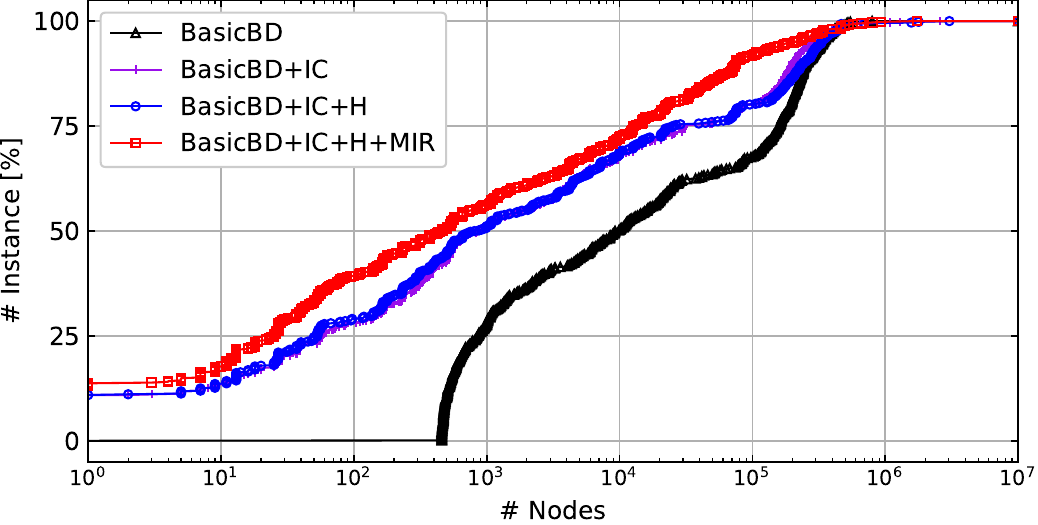}}
		\caption {Performance profiles of the \CPU times, end gaps, root gap\roundtwo{s}, and number of explored nodes on the instances in testset \Ttwo.}
		\label{fig5}
\end{figure}

First, we can observe from \cref{fig5a} that compared with those returned by \BasicSBD, the number of solved instances returned by \BasicSBDI is much larger and the CPU time returned by \BasicSBDI is much smaller.
The reason can be seen in \cref{InitialCuts,ICNode} in which we observe that with the initial cuts \eqref{initcut-pre}, \BasicSBDI can return a much better root gap and a much smaller number of explored nodes than \BasicSBD. 
This shows that initial cuts  \eqref{initcut-pre} can indeed strengthen the \LP relaxation of formulation \eqref{mp-ipscp} (as they enable \CPLEX to construct further  internal cuts)
and thus improve the overall performance of the proposed \BD algorithm.

\begin{figure}[h]
		\centering
		\includegraphics[width=0.47\textwidth]{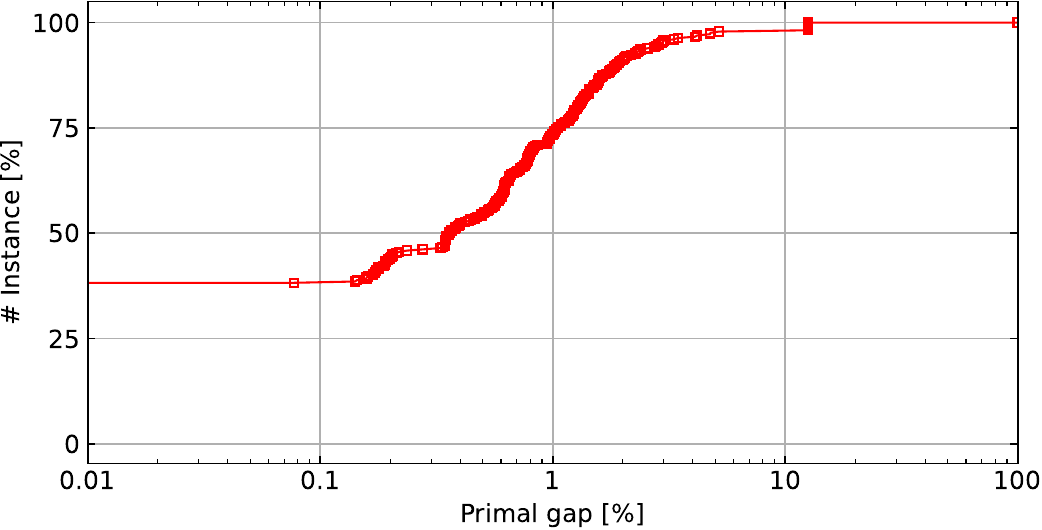}
			\vspace*{-0.3cm}
		\caption {Performance profile of the primal gaps of the solutions returned by \RENS on the instances in testset \Ttwo.}
		\label{Heur}
		\vspace*{-0.3cm}
\end{figure}

Next, we evaluate the performance effect of the customized \RENS heuristic algorithm in \cref{subsec:primalheuristic}.
Comparing \BasicSBDI and \BasicSBDIH in \cref{fig5a,fig5b}, 
we find that although the customized \RENS heuristic algorithm does not contribute to increase the number of solved instances and reduce the overall solution time of the solved instances,
it can effectively reduce the end gap of the unsolved instances.
To gain deeper insight into the computational efficiency of the customized \RENS heuristic algorithm,
we report the {primal gaps} of the solutions returned by the customized \RENS heuristic algorithm in \cref{Heur}.
As shown in \cref{Heur}, for about 75\% of the instances, the primal gap of the solutions returned by \BasicSBDIH is less than $1\%$, 
which highlights the effectiveness of the customized \RENS heuristic algorithm in providing a high-quality solution for the \PSCP and improving the overall performance of the \BD algorithm on the hard instances.

Finally, we evaluate the performance effect of the \MIR-enhanced Benders feasibility cuts in \cref{subsec:validineqs}.
As shown in \cref{InitialCuts}, with the \MIR-enhanced Benders feasibility cuts, the root gap returned by \BasicSBDIHV is much smaller than that returned by \BasicSBDIH, which shows the effectiveness of the \MIR-enhanced Benders feasibility cuts in strengthening the \LP relaxation of formulation \eqref{mp-ipscp}. 
As a result, equipped with the \MIR-enhanced Benders feasibility cuts, the proposed \BD algorithm performs much better. 
In particular, the number of explored nodes and the CPU time (of the hard instances) returned by \BasicSBDIHV are much smaller, and more than 10\% instances can be solved to optimality with the time limit.

In summary, our results show that the three enhancement techniques presented in \cref{sect:implementation} can all improve the overall performance of the \BD algorithm in terms of achieving a better solution time or a better end gap.

\section{Conclusions and future works}
\label{sect:conclusion}

In this paper, we consider the \PSCP in which each row of the constraint matrix is a Bernoulli random vector and has a finite discrete distribution. 
We develop an efficient \BD algorithm based on a Benders reformulation of the \PSCP.
Two key features of the proposed \BD algorithm, which make it particularly suitable to solve large-scale \PSCP{s}, are: 
(i) the number of variables in the Benders reformulation is equal to the number of columns but independent of the number of scenarios of the random rows; and
(ii) the Benders feasibility cuts can be separated by efficient polynomial-time algorithms. 
We have also proposed three enhancement techniques (i.e., initial cuts, the customized \RENS heuristic algorithm, and the \MIR-enhanced Benders feasibility cuts) to improve the performance of the proposed \BD algorithm. 
Extensive computational results showed that the proposed \BD algorithm outperforms the state-of-the-art general-purpose \MIP solver's branch-and-cut and automatic \BD algorithms.
Furthermore, the proposed \BD algorithm is capable of solving large-scale \PSCP instances with up to $500$ rows, $5000$ columns, and $2000$ scenarios of the random rows.

As for future work, we will develop more effective cutting planes to improve the performance of the proposed \BD algorithm for solving the \PSCP, especially for the case where $A_i x \geq 1$, $i \in [m]$, are required to be satisfied at a high probability $1-\epsilon$ (e.g., $\epsilon=0.025$).
 \rev{In addition, it would also be interesting to extend the proposed \BD algorithm to solve the joint \PSCP: $\min ~ \{ c^\top x \,:\, \mathbb{P}\{Ax \geq \boldsymbol{1} \}\geq1-\epsilon,~x \in \{0,1\}^n \}$ \citep{Beraldi2010}. 
\roundtwo{One difficulty of this extension is that the big-M reformulation of the joint \PSCP provides a very weak \LP relaxation bound, hindering the
convergence of the \BD algorithm; see Section 4 of the online supplement for more details.}
In order to develop an efficient \BD algorithm for solving the joint \PSCP, we need  to develop effective cutting planes (by judiciously exploiting the special structure of the problem) to strengthen the big-M reformulation of the joint \PSCP.}\\[5pt]
{\noindent\bf Acknowledgement }\\[5pt]
\rev{We would like to thank the two anonymous reviewers for their insightful comments.
{Jie Liang, Cheng-Yang Yu, and Wei Lv contributed equally and are joint first authors.}
The work of Wei-Kun Chen was supported in part by the Chinese NSF grants (No. 12101048).
The work of Yu-Hong Dai was supported in part by the Chinese NSF grants (Nos. 12021001 and 11991021).}

\newpage
\section*{Appendix. Detailed computational results}

{
	\centering
	\mytabcolsepeach
	\myarraystretch
	\centering
	\scriptsize
	\begin{longtable}{rrrrrrrrrr}
		\caption{Performance comparison of settings \CPX, \AUTO, \SBD, and \RBD for the independent instances with $\epsilon =0.05$ in testset \Tone.             \T (\G \%) denotes that the CPU time is \T if the instance is solved within the timelimit; otherwise, it denotes that the end gap returned by \CPLEX is \G \%. $\NODE$ denotes the number of explored nodes.}\\
		\label{tab:table1}\\
		\headlinea
		\multirow{2}{*}{\texttt{Name}} & \multicolumn{2}{c}{\CPX} & \multicolumn{2}{c}{\AUTO} & \multicolumn{2}{c}{\SBD} & \multicolumn{2}{c}{\RBD}\\\cmidrule(r){2-3}\cmidrule(r){4-5}\cmidrule(r){6-7} \cmidrule(r){8-9} & {\T (\G \%)} & \multicolumn{1}{r}{\NODE} & {\T (\G \%)} & \multicolumn{1}{r}{\NODE} & \T (\G \%) & \multicolumn{1}{r}{\NODE} & \T (\G \%) & \multicolumn{1}{r}{\NODE}\\
		\headlineb
		\endhead
		\footlinea
		\multicolumn{9}{r}{continued on next page}\\
		\footlineb
		\endfoot
		\endline
		\endlastfoot
		scp41 & 516.0 & 1298 & 6.2 & 236 & 4.5 & 22 & 4.2 & 86 \\ 
		scp42 & 675.3 & 1563 & 8.3 & 542 & 4.3 & 53 & 4.3 & 464 \\ 
		scp43 & 411.7 & 1163 & 6.9 & 595 & 4.2 & 58 & 4.1 & 155 \\ 
		scp44 & 60.3 & 11 & 4.5 & 0 & 3.6 & 5 & 3.5 & 4 \\ 
		scp45 & 208.6 & 581 & 4.9 & 35 & 3.5 & 5 & 3.5 & 5 \\ 
		scp46 & 82.5 & 35 & 6.4 & 18 & 3.8 & 7 & 3.5 & 7 \\ 
		scp47 & 94.6 & 120 & 5.6 & 84 & 3.6 & 7 & 3.6 & 11 \\ 
		scp48 & 95.8 & 47 & 5.2 & 83 & 3.7 & 9 & 3.7 & 12 \\ 
		scp49 & 1191.5 & 3401 & 11.2 & 2050 & 6.0 & 312 & 5.4 & 1263 \\ 
		scp410 & 288.3 & 668 & 7.9 & 378 & 4.5 & 46 & 4.4 & 251 \\ 
		scp51 & 365.2 & 805 & 12.1 & 715 & 5.4 & 97 & 4.5 & 280 \\ 
		scp52 & 1389.3 & 4284 & 13.5 & 801 & 5.7 & 82 & 4.8 & 311 \\ 
		scp53 & 1306.2 & 3955 & 13.9 & 643 & 6.0 & 115 & 5.1 & 597 \\ 
		scp54 & 1342.3 & 4891 & 14.9 & 1340 & 6.0 & 103 & 4.9 & 454 \\ 
		scp55 & 72.5 & 51 & 8.1 & 0 & 3.8 & 7 & 3.8 & 9 \\ 
		scp56 & 150.1 & 165 & 9.6 & 54 & 4.2 & 13 & 4.2 & 23 \\ 
		scp57 & 354.8 & 881 & 12.0 & 186 & 5.0 & 52 & 4.5 & 137 \\ 
		scp58 & 1216.5 & 5037 & 12.5 & 1224 & 6.7 & 236 & 5.0 & 614 \\ 
		scp59 & 223.2 & 329 & 13.0 & 159 & 4.7 & 35 & 4.3 & 70 \\ 
		scp510 & 2034.8 & 6250 & 14.2 & 921 & 5.8 & 125 & 5.6 & 987 \\ 
		scp61 & 4401.4 & 19537 & 11.1 & 2240 & 6.7 & 457 & 7.0 & 1823 \\ 
		scp62 & 1552.1 & 6165 & 8.6 & 1868 & 5.7 & 195 & 5.9 & 1351 \\ 
		scp63 & 397.5 & 1446 & 5.2 & 667 & 4.5 & 53 & 4.1 & 186 \\ 
		scp64 & 311.3 & 897 & 6.7 & 314 & 4.6 & 37 & 4.2 & 156 \\ 
		scp65 & (5.2) & 13594 & 68.7 & 16238 & 23.2 & 4308 & 36.5 & 21956 \\ 
		scpa1 & (4.5) & 4800 & 124.8 & 10244 & 43.3 & 3118 & 32.3 & 8631 \\ 
		scpa2 & (1.5) & 7649 & 114.9 & 6831 & 14.4 & 434 & 16.4 & 4191 \\ 
		scpa3 & (2.1) & 6229 & 38.7 & 1683 & 8.3 & 127 & 8.3 & 1605 \\ 
		scpa4 & (3.0) & 5964 & 45.5 & 2802 & 11.8 & 529 & 11.8 & 2197 \\ 
		scpa5 & (2.0) & 5710 & 51.4 & 3960 & 11.4 & 352 & 9.7 & 1743 \\ 
		scpb1 & (11.4) & 5594 & 335.2 & 31402 & 87.5 & 12834 & 122.3 & 57770 \\ 
		scpb2 & (11.7) & 6661 & 576.6 & 95526 & 193.1 & 25416 & 190.0 & 98827 \\ 
		scpb3 & (11.5) & 4869 & 341.4 & 25726 & 90.1 & 10854 & 97.1 & 43021 \\ 
		scpb4 & (13.1) & 4553 & 720.0 & 96625 & 193.2 & 24709 & 246.3 & 153084 \\ 
		scpb5 & (10.8) & 4727 & 233.7 & 18253 & 47.4 & 3834 & 48.8 & 23480 \\ 
		scpc1 & (8.6) & 2202 & 573.7 & 32800 & 73.1 & 3829 & 115.4 & 30432 \\ 
		scpc2 & (8.2) & 1759 & 881.5 & 69243 & 351.4 & 29043 & 358.2 & 113709 \\ 
		scpc3 & (9.1) & 1423 & 2262.3 & 312112 & 778.3 & 75759 & 1829.2 & 501243 \\ 
		scpc4 & (5.7) & 2021 & 276.6 & 8374 & 36.6 & 2050 & 27.9 & 4645 \\ 
		scpc5 & (9.0) & 2337 & 1103.1 & 102990 & 166.2 & 15332 & 304.1 & 97395 \\ 
		scpd1 & (13.7) & 2664 & 652.8 & 41611 & 110.0 & 9014 & 108.9 & 34953 \\ 
		scpd2 & (13.3) & 2874 & 1452.5 & 154692 & 317.9 & 51405 & 437.0 & 226652 \\ 
		scpd3 & (14.6) & 2509 & 4063.0 & 714490 & 1092.1 & 186792 & 1624.0 & 804046 \\ 
		scpd4 & (15.6) & 2833 & 1553.4 & 163159 & 448.7 & 53332 & 953.9 & 477418 \\ 
		scpd5 & (14.5) & 2595 & 1236.6 & 128150 & 311.0 & 36591 & 413.3 & 200392 \\ 
		scpe1 & (23.8) & 202909 & 291.9 & 440769 & 20.0 & 8267 & 997.0 & 1209582 \\ 
		scpe2 & (14.9) & 124022 & 23.0 & 39773 & 20.9 & 7898 & 13.8 & 20017 \\ 
		scpe3 & (17.3) & 143209 & 28.0 & 44787 & 27.8 & 11285 & 22.8 & 36916 \\ 
		scpe4 & (15.5) & 244409 & 72.1 & 137334 & 569.5 & 182845 & 408.8 & 473928 \\ 
		scpe5 & (12.8) & 127191 & 21.7 & 35065 & 26.9 & 11054 & 15.8 & 25085 \\ 
		scpnre1 & (23.7) & 1887 & (9.9) & 618781 & (3.3) & 841592 & (4.4) & 2918483 \\ 
		scpnre2 & (26.7) & 1515 & (8.9) & 2117769 & (6.2) & 739139 & (6.5) & 2888883 \\ 
		scpnre3 & (28.0) & 1802 & (10.1) & 646693 & 3546.1 & 507681 & 6513.6 & 3802223 \\ 
		scpnre4 & (22.6) & 2335 & (9.7) & 600269 & (4.7) & 904683 & (5.2) & 3131983 \\ 
		scpnre5 & (24.1) & 1669 & (4.9) & 2802269 & 3531.8 & 490633 & (4.6) & 3309083 \\ 
		scpnrf1 & (29.5) & 1623 & (12.2) & 806303 & 1543.4 & 258465 & 2083.0 & 1524626 \\ 
		scpnrf2 & (25.4) & 3631 & 1385.3 & 946283 & 787.9 & 131735 & 940.8 & 878851 \\ 
		scpnrf3 & (28.8) & 2385 & 5485.7 & 2478715 & 2063.2 & 330887 & 2048.9 & 1414203 \\ 
		scpnrf4 & (28.5) & 2111 & 6998.8 & 2677316 & 5211.1 & 948167 & (4.6) & 5155605 \\ 
		scpnrf5 & (34.9) & 2431 & (16.7) & 2504969 & (11.1) & 804432 & (11.6) & 2339715 \\ 
	\end{longtable}
}
{
	\centering
	\mytabcolsepeach
	\myarraystretch
	\centering
	\scriptsize
	\begin{longtable}{rrrrrrrrrr}
		\caption{Performance comparison of settings \CPX, \AUTO, \SBD, and \RBD for the independent instances with $\epsilon =0.1$ in testset \Tone. 
			\T (\G \%) denotes that the CPU time is \T if the instance is solved within the timelimit; otherwise, it denotes that the end gap returned by \CPLEX is \G \%. $\NODE$ denotes the number of explored nodes.}\\
		\label{tab:table2}\\
		\headlinea
		\multirow{2}{*}{\texttt{Name}} & \multicolumn{2}{c}{\CPX} & \multicolumn{2}{c}{\AUTO} & \multicolumn{2}{c}{\SBD} & \multicolumn{2}{c}{\RBD}\\\cmidrule(r){2-3}\cmidrule(r){4-5}\cmidrule(r){6-7} \cmidrule(r){8-9} & {\T (\G \%)} & \multicolumn{1}{r}{\NODE} & {\T (\G \%)} & \multicolumn{1}{r}{\NODE} & \T (\G \%) & \multicolumn{1}{r}{\NODE} & \T (\G \%) & \multicolumn{1}{r}{\NODE}\\
		\headlineb
		\endhead
		\footlinea
		\multicolumn{9}{r}{continued on next page}\\
		\footlineb
		\endfoot
		\endline
		\endlastfoot
		scp41 & 365.5 & 1663 & 4.2 & 0 & 2.2 & 0 & 2.2 & 0 \\ 
		scp42 & 293.9 & 1341 & 4.3 & 0 & 2.1 & 0 & 2.2 & 0 \\ 
		scp43 & 554.8 & 2262 & 6.0 & 71 & 3.5 & 18 & 3.5 & 21 \\ 
		scp44 & 201.3 & 859 & 1.9 & 0 & 2.1 & 0 & 2.1 & 0 \\ 
		scp45 & 498.2 & 1843 & 4.8 & 35 & 3.6 & 7 & 3.5 & 7 \\ 
		scp46 & 738.2 & 3436 & 5.5 & 117 & 3.6 & 24 & 3.4 & 24 \\ 
		scp47 & 133.1 & 253 & 2.0 & 0 & 1.3 & 0 & 1.3 & 0 \\ 
		scp48 & 722.7 & 3316 & 5.5 & 215 & 3.8 & 43 & 3.6 & 43 \\ 
		scp49 & 776.2 & 2654 & 5.0 & 31 & 2.1 & 0 & 2.1 & 0 \\ 
		scp410 & 129.7 & 515 & 4.3 & 0 & 2.1 & 0 & 2.2 & 0 \\ 
		scp51 & 737.7 & 1926 & 3.5 & 0 & 2.2 & 0 & 2.2 & 0 \\ 
		scp52 & 3180.1 & 19408 & 8.3 & 145 & 4.8 & 78 & 4.3 & 62 \\ 
		scp53 & 877.5 & 4057 & 9.4 & 43 & 4.1 & 32 & 3.8 & 32 \\ 
		scp54 & 3885.5 & 16160 & 7.3 & 9 & 4.0 & 5 & 4.0 & 5 \\ 
		scp55 & 95.2 & 147 & 2.8 & 0 & 2.1 & 0 & 2.2 & 0 \\ 
		scp56 & 624.8 & 2354 & 6.5 & 0 & 2.4 & 0 & 2.4 & 0 \\ 
		scp57 & 517.3 & 1232 & 6.3 & 0 & 3.6 & 1 & 3.6 & 1 \\ 
		scp58 & 621.5 & 2120 & 4.9 & 0 & 2.1 & 0 & 2.1 & 0 \\ 
		scp59 & 765.8 & 3024 & 10.5 & 66 & 4.1 & 16 & 3.7 & 12 \\ 
		scp510 & 1016.4 & 2968 & 4.2 & 0 & 2.1 & 0 & 2.3 & 0 \\ 
		scp61 & 1139.4 & 4046 & 4.3 & 95 & 3.8 & 30 & 3.7 & 30 \\ 
		scp62 & 2596.6 & 13549 & 5.3 & 122 & 4.0 & 25 & 3.9 & 25 \\ 
		scp63 & (2.1) & 11783 & 4.9 & 12 & 3.8 & 9 & 3.5 & 9 \\ 
		scp64 & 1246.2 & 4040 & 6.5 & 47 & 3.9 & 32 & 3.8 & 27 \\ 
		scp65 & (2.2) & 25497 & 5.5 & 807 & 5.5 & 219 & 4.6 & 218 \\ 
		scpa1 & (5.5) & 4260 & 14.8 & 413 & 7.9 & 190 & 5.3 & 204 \\ 
		scpa2 & (1.7) & 7407 & 12.6 & 347 & 6.0 & 122 & 4.5 & 130 \\ 
		scpa3 & (2.9) & 7291 & 15.6 & 275 & 5.6 & 37 & 4.7 & 37 \\ 
		scpa4 & (1.6) & 7781 & 15.2 & 110 & 5.4 & 41 & 4.5 & 38 \\ 
		scpa5 & (4.2) & 5463 & 15.3 & 79 & 6.1 & 63 & 4.8 & 47 \\ 
		scpb1 & (9.9) & 5969 & 31.7 & 7180 & 21.7 & 3125 & 10.3 & 3780 \\ 
		scpb2 & (9.8) & 5669 & 28.0 & 1336 & 8.1 & 387 & 5.8 & 365 \\ 
		scpb3 & (11.3) & 4769 & 18.0 & 1405 & 8.2 & 554 & 5.8 & 564 \\ 
		scpb4 & (9.3) & 6669 & 29.1 & 3034 & 12.1 & 1201 & 7.0 & 1435 \\ 
		scpb5 & (6.7) & 7906 & 20.2 & 622 & 7.3 & 104 & 5.7 & 117 \\ 
		scpc1 & (7.8) & 2246 & 38.3 & 2083 & 14.5 & 898 & 6.6 & 605 \\ 
		scpc2 & (11.7) & 2067 & 26.7 & 432 & 9.2 & 107 & 6.9 & 105 \\ 
		scpc3 & (8.6) & 1873 & 59.3 & 7230 & 43.7 & 4178 & 14.7 & 3839 \\ 
		scpc4 & (8.3) & 2079 & 23.2 & 443 & 7.5 & 102 & 5.8 & 158 \\ 
		scpc5 & (8.0) & 1824 & 54.5 & 2323 & 13.0 & 531 & 6.9 & 727 \\ 
		scpd1 & (8.9) & 3363 & 43.3 & 3065 & 13.2 & 973 & 8.4 & 891 \\ 
		scpd2 & (11.2) & 3615 & 26.3 & 1776 & 11.3 & 372 & 8.3 & 415 \\ 
		scpd3 & (15.3) & 2632 & 51.6 & 3287 & 11.5 & 426 & 8.6 & 857 \\ 
		scpd4 & (12.9) & 3067 & 39.4 & 5273 & 16.9 & 1640 & 9.5 & 1600 \\ 
		scpd5 & (11.3) & 3182 & 52.8 & 6557 & 22.0 & 2048 & 11.4 & 2432 \\ 
		scpe1 & (21.7) & 181178 & 4.6 & 7594 & 7.9 & 5117 & 7.3 & 8169 \\ 
		scpe2 & 1349.6 & 14888 & 1.5 & 479 & 3.5 & 26 & 3.1 & 32 \\ 
		scpe3 & 709.0 & 8289 & 0.7 & 216 & 3.5 & 60 & 3.0 & 60 \\ 
		scpe4 & 1241.4 & 29164 & 0.8 & 14 & 3.5 & 11 & 3.1 & 11 \\ 
		scpe5 & (20.1) & 208082 & 3.9 & 6569 & 3.4 & 19 & 3.0 & 19 \\ 
		scpnre1 & (25.3) & 1622 & 228.1 & 158928 & 211.4 & 57880 & 101.3 & 53471 \\ 
		scpnre2 & (25.2) & 1616 & 487.7 & 425608 & 276.5 & 69704 & 219.8 & 118952 \\ 
		scpnre3 & (21.0) & 1127 & 736.0 & 72439 & 107.3 & 28256 & 62.1 & 30832 \\ 
		scpnre4 & (19.9) & 2437 & 163.3 & 70287 & 114.1 & 24105 & 56.3 & 26972 \\ 
		scpnre5 & (19.3) & 1729 & 160.7 & 55359 & 69.4 & 11914 & 44.7 & 17738 \\ 
		scpnrf1 & (28.6) & 2639 & 197.8 & 84989 & 91.8 & 29188 & 58.9 & 29588 \\ 
		scpnrf2 & (23.8) & 2131 & 124.0 & 34403 & 66.8 & 14467 & 38.1 & 14502 \\ 
		scpnrf3 & (23.5) & 1671 & 100.4 & 29090 & 61.2 & 7808 & 33.0 & 8400 \\ 
		scpnrf4 & (28.7) & 2369 & 314.0 & 214797 & 225.7 & 108052 & 124.8 & 91633 \\ 
		scpnrf5 & (30.9) & 2331 & 794.5 & 671607 & 796.2 & 403883 & 645.3 & 497981 \\ 
	\end{longtable}
}
{
	\centering
	\mytabcolsepeach
	\myarraystretch
	\centering
	\scriptsize
	\begin{longtable}{rrrrrrrrrr}
		\caption{Performance comparison of settings \CPX, \AUTO, \SBD, and \RBD for the \rev{correlated} instances with $\epsilon =0.05$ in testset \Tone.             
			\T (\G \%) denotes that the CPU time is \T if the instance is solved within the timelimit; otherwise, it denotes that the end gap returned by \CPLEX is \G \%. $\NODE$ denotes the number of explored nodes.}\\
		\label{tab:table3}\\
		\headlinea
		\multirow{2}{*}{\texttt{Name}} & \multicolumn{2}{c}{\CPX} & \multicolumn{2}{c}{\AUTO} & \multicolumn{2}{c}{\SBD} & \multicolumn{2}{c}{\RBD}\\\cmidrule(r){2-3}\cmidrule(r){4-5}\cmidrule(r){6-7} \cmidrule(r){8-9} & {\T (\G \%)} & \multicolumn{1}{r}{\NODE} & {\T (\G \%)} & \multicolumn{1}{r}{\NODE} & \T (\G \%) & \multicolumn{1}{r}{\NODE} & \T (\G \%) & \multicolumn{1}{r}{\NODE}\\
		\headlineb
		\endhead
		\footlinea
		\multicolumn{9}{r}{continued on next page}\\
		\footlineb
		\endfoot
		\endline
		\endlastfoot
		scp41 & 465.0 & 1372 & 6.5 & 434 & 4.3 & 70 & 4.0 & 212 \\ 
		scp42 & 426.0 & 1377 & 7.6 & 260 & 4.3 & 29 & 4.3 & 74 \\ 
		scp43 & 150.0 & 283 & 5.7 & 247 & 4.0 & 9 & 3.9 & 22 \\ 
		scp44 & 463.6 & 1813 & 6.0 & 526 & 4.3 & 98 & 4.1 & 306 \\ 
		scp45 & 504.4 & 1896 & 12.6 & 1619 & 7.1 & 328 & 5.6 & 1133 \\ 
		scp46 & 1221.1 & 2846 & 18.2 & 2796 & 6.0 & 207 & 7.8 & 1681 \\ 
		scp47 & 333.9 & 1366 & 6.0 & 215 & 4.1 & 31 & 3.9 & 67 \\ 
		scp48 & 483.2 & 1451 & 9.0 & 950 & 4.3 & 41 & 4.2 & 164 \\ 
		scp49 & 963.5 & 2362 & 11.4 & 938 & 4.3 & 49 & 4.2 & 164 \\ 
		scp410 & 747.3 & 1570 & 7.7 & 464 & 4.7 & 66 & 4.6 & 140 \\ 
		scp51 & 211.0 & 267 & 8.8 & 0 & 3.8 & 7 & 4.0 & 7 \\ 
		scp52 & 3402.0 & 13297 & 16.5 & 1058 & 5.8 & 104 & 5.1 & 788 \\ 
		scp53 & 1193.5 & 4214 & 19.9 & 2043 & 7.4 & 253 & 6.4 & 1570 \\ 
		scp54 & 1162.4 & 3870 & 11.4 & 694 & 5.5 & 105 & 4.9 & 246 \\ 
		scp55 & 84.0 & 59 & 9.4 & 35 & 4.4 & 8 & 4.2 & 14 \\ 
		scp56 & 304.4 & 657 & 12.9 & 339 & 5.5 & 43 & 4.3 & 179 \\ 
		scp57 & 768.0 & 2204 & 12.2 & 711 & 5.7 & 83 & 4.8 & 220 \\ 
		scp58 & 673.7 & 1551 & 16.9 & 693 & 5.5 & 64 & 4.5 & 121 \\ 
		scp59 & 1109.1 & 3464 & 16.5 & 911 & 6.2 & 79 & 5.2 & 282 \\ 
		scp510 & 852.4 & 2381 & 11.4 & 920 & 6.5 & 147 & 4.9 & 213 \\ 
		scp61 & 2928.7 & 11156 & 12.6 & 1944 & 6.6 & 270 & 5.6 & 1087 \\ 
		scp62 & 1549.5 & 7423 & 7.2 & 1902 & 5.5 & 235 & 5.4 & 1184 \\ 
		scp63 & (3.1) & 12542 & 17.1 & 4006 & 6.4 & 318 & 8.3 & 2397 \\ 
		scp64 & 2099.7 & 7740 & 9.3 & 1174 & 6.2 & 272 & 5.3 & 1016 \\ 
		scp65 & (1.2) & 16174 & 13.3 & 2992 & 7.0 & 301 & 7.8 & 2142 \\ 
		scpa1 & (4.3) & 5082 & 66.4 & 10572 & 38.7 & 2020 & 20.5 & 5354 \\ 
		scpa2 & (4.4) & 4070 & 92.0 & 8141 & 27.3 & 2072 & 30.6 & 9185 \\ 
		scpa3 & (1.6) & 6000 & 47.1 & 5355 & 15.1 & 563 & 13.0 & 2148 \\ 
		scpa4 & (5.1) & 3825 & 175.7 & 10990 & 26.5 & 1860 & 29.3 & 8787 \\ 
		scpa5 & 3848.4 & 5618 & 33.1 & 1288 & 7.9 & 172 & 6.5 & 514 \\ 
		scpb1 & (12.0) & 5169 & 731.5 & 131360 & 375.0 & 56091 & 362.4 & 172282 \\ 
		scpb2 & (13.0) & 4545 & 536.4 & 74148 & 126.5 & 13894 & 169.7 & 80927 \\ 
		scpb3 & (6.6) & 6614 & 38.8 & 2834 & 13.9 & 427 & 9.3 & 1315 \\ 
		scpb4 & (8.1) & 6233 & 57.2 & 10093 & 20.2 & 1146 & 19.8 & 6862 \\ 
		scpb5 & (9.3) & 5997 & 350.2 & 27663 & 22.6 & 1945 & 27.5 & 12219 \\ 
		scpc1 & (8.0) & 1637 & 1227.6 & 95782 & 152.6 & 10259 & 169.0 & 43148 \\ 
		scpc2 & (8.1) & 2014 & 861.1 & 51165 & 117.8 & 6334 & 171.1 & 40938 \\ 
		scpc3 & (10.7) & 1639 & 1166.2 & 105661 & 286.0 & 26421 & 427.3 & 134194 \\ 
		scpc4 & (9.2) & 1930 & 1812.1 & 147473 & 177.2 & 11328 & 446.8 & 119519 \\ 
		scpc5 & (8.8) & 1939 & 2860.1 & 211377 & 452.0 & 46089 & 493.3 & 133370 \\ 
		scpd1 & (18.4) & 2393 & 1222.4 & 603106 & 566.5 & 70259 & 1198.7 & 540349 \\ 
		scpd2 & (17.2) & 2703 & 2619.0 & 444077 & 1583.5 & 235201 & 1718.7 & 839856 \\ 
		scpd3 & (11.7) & 2553 & 2149.0 & 310118 & 556.9 & 74209 & 1181.2 & 558066 \\ 
		scpd4 & (15.2) & 2594 & 1584.3 & 143756 & 421.1 & 51913 & 289.8 & 119330 \\ 
		scpd5 & (14.4) & 2842 & 1519.5 & 159509 & 360.9 & 41688 & 499.5 & 173137 \\ 
		scpe1 & (24.2) & 210709 & 79.2 & 159559 & 232.6 & 89478 & 15.8 & 20447 \\ 
		scpe2 & 6559.2 & 135756 & 16.5 & 34516 & 19.1 & 7179 & 13.4 & 20529 \\ 
		scpe3 & (18.3) & 142223 & 29.0 & 58052 & 29.8 & 11153 & 21.9 & 36547 \\ 
		scpe4 & 3837.9 & 74155 & 9.3 & 17571 & 19.7 & 7596 & 10.2 & 14017 \\ 
		scpe5 & (14.1) & 114503 & 334.8 & 469452 & 22.4 & 9351 & 15.7 & 26985 \\ 
		scpnre1 & (26.4) & 1942 & (7.8) & 413177 & 3042.0 & 424957 & 3582.8 & 1899603 \\ 
		scpnre2 & (26.7) & 1797 & (8.4) & 2290269 & (7.5) & 652883 & (7.8) & 2649383 \\ 
		scpnre3 & (24.3) & 2188 & (7.6) & 505714 & (3.7) & 817621 & (4.4) & 3292283 \\ 
		scpnre4 & (22.1) & 2053 & (9.2) & 610009 & (5.6) & 830283 & (6.1) & 2890283 \\ 
		scpnre5 & (25.4) & 1271 & (6.8) & 2628569 & 6103.5 & 835667 & (6.7) & 2770183 \\ 
		scpnrf1 & (27.3) & 2431 & 1795.1 & 1116438 & 2516.2 & 590110 & 2399.5 & 2005720 \\ 
		scpnrf2 & (22.7) & 4931 & 1264.8 & 829410 & 1072.4 & 190599 & 902.5 & 802042 \\ 
		scpnrf3 & (26.6) & 3771 & 626.5 & 340005 & 588.7 & 101540 & 511.2 & 376534 \\ 
		scpnrf4 & (33.7) & 4037 & (12.2) & 2748769 & 5068.6 & 918957 & 5333.5 & 4163038 \\ 
		scpnrf5 & (35.5) & 2031 & (13.0) & 2637669 & (6.1) & 935515 & (12.1) & 2974415 \\ 
	\end{longtable}
}
{
	\centering
	\mytabcolsepeach
	\myarraystretch
	\centering
	\scriptsize
	\begin{longtable}{rrrrrrrrrr}
		\caption{Performance comparison of settings \CPX, \AUTO, \SBD, and \RBD for the \rev{correlated} instances with $\epsilon =0.1$ in testset \Tone.             
			\T (\G \%) denotes that the CPU time is \T if the instance is solved within the timelimit; otherwise, it denotes that the end gap returned by \CPLEX is \G \%. $\NODE$ denotes the number of explored nodes.}\\
		\label{tab:table4}\\
		\headlinea
		\multirow{2}{*}{\texttt{Name}} & \multicolumn{2}{c}{\CPX} & \multicolumn{2}{c}{\AUTO} & \multicolumn{2}{c}{\SBD} & \multicolumn{2}{c}{\RBD}\\\cmidrule(r){2-3}\cmidrule(r){4-5}\cmidrule(r){6-7} \cmidrule(r){8-9} & {\T (\G \%)} & \multicolumn{1}{r}{\NODE} & {\T (\G \%)} & \multicolumn{1}{r}{\NODE} & \T (\G \%) & \multicolumn{1}{r}{\NODE} & \T (\G \%) & \multicolumn{1}{r}{\NODE}\\
		\headlineb
		\endhead
		\footlinea
		\multicolumn{9}{r}{continued on next page}\\
		\footlineb
		\endfoot
		\endline
		\endlastfoot
		scp41 & 207.6 & 1036 & 6.0 & 31 & 3.6 & 15 & 3.4 & 15 \\ 
		scp42 & 294.9 & 1207 & 1.8 & 0 & 2.1 & 0 & 2.1 & 0 \\ 
		scp43 & 287.9 & 1433 & 5.9 & 46 & 3.5 & 3 & 3.3 & 6 \\ 
		scp44 & 469.2 & 1698 & 1.7 & 0 & 2.2 & 0 & 2.2 & 0 \\ 
		scp45 & 193.0 & 851 & 2.8 & 0 & 2.1 & 0 & 2.1 & 0 \\ 
		scp46 & 320.2 & 1037 & 3.6 & 0 & 2.2 & 0 & 2.2 & 0 \\ 
		scp47 & 87.8 & 114 & 1.7 & 0 & 2.1 & 0 & 2.0 & 0 \\ 
		scp48 & 820.9 & 3576 & 5.4 & 183 & 3.9 & 40 & 3.7 & 41 \\ 
		scp49 & 641.2 & 2470 & 4.0 & 0 & 2.1 & 0 & 2.1 & 0 \\ 
		scp410 & 102.1 & 171 & 1.8 & 0 & 2.1 & 0 & 2.1 & 0 \\ 
		scp51 & 595.8 & 1936 & 6.2 & 0 & 2.2 & 0 & 2.3 & 0 \\ 
		scp52 & (2.0) & 26268 & 6.7 & 185 & 5.0 & 91 & 4.3 & 136 \\ 
		scp53 & 650.3 & 2532 & 8.3 & 0 & 2.5 & 0 & 2.4 & 0 \\ 
		scp54 & 3098.5 & 16503 & 9.5 & 23 & 4.1 & 35 & 3.8 & 40 \\ 
		scp55 & 97.6 & 122 & 2.2 & 0 & 2.2 & 0 & 2.1 & 0 \\ 
		scp56 & 682.2 & 3019 & 6.2 & 0 & 2.2 & 0 & 2.1 & 0 \\ 
		scp57 & 611.2 & 2358 & 10.8 & 71 & 4.5 & 25 & 4.1 & 27 \\ 
		scp58 & 455.2 & 1861 & 3.4 & 0 & 2.2 & 0 & 2.3 & 0 \\ 
		scp59 & 962.9 & 3309 & 10.2 & 62 & 4.4 & 68 & 3.9 & 70 \\ 
		scp510 & 836.6 & 2338 & 9.9 & 5 & 3.9 & 7 & 3.8 & 7 \\ 
		scp61 & 2137.2 & 11661 & 5.7 & 255 & 4.7 & 176 & 4.1 & 122 \\ 
		scp62 & 2539.5 & 9760 & 6.8 & 108 & 4.0 & 40 & 3.8 & 44 \\ 
		scp63 & 1743.8 & 4370 & 5.2 & 15 & 3.6 & 9 & 3.6 & 9 \\ 
		scp64 & 2575.1 & 11502 & 6.3 & 34 & 3.8 & 19 & 3.8 & 19 \\ 
		scp65 & (1.6) & 24010 & 6.8 & 639 & 5.0 & 174 & 4.3 & 243 \\ 
		scpa1 & (3.1) & 5831 & 14.0 & 682 & 7.1 & 196 & 5.1 & 214 \\ 
		scpa2 & (2.4) & 7493 & 15.0 & 121 & 5.7 & 40 & 4.9 & 51 \\ 
		scpa3 & (4.5) & 6633 & 14.8 & 253 & 6.0 & 46 & 4.9 & 36 \\ 
		scpa4 & (5.5) & 5530 & 17.2 & 267 & 6.7 & 144 & 5.0 & 71 \\ 
		scpa5 & (1.7) & 8369 & 12.5 & 71 & 4.9 & 21 & 4.6 & 23 \\ 
		scpb1 & (13.2) & 5298 & 29.2 & 6610 & 20.0 & 3247 & 8.3 & 3072 \\ 
		scpb2 & (12.3) & 4569 & 27.6 & 885 & 6.9 & 86 & 5.8 & 219 \\ 
		scpb3 & (10.6) & 5909 & 27.4 & 2604 & 11.0 & 1355 & 5.8 & 671 \\ 
		scpb4 & (9.9) & 6605 & 30.6 & 1522 & 12.8 & 1098 & 6.3 & 960 \\ 
		scpb5 & (9.7) & 6669 & 19.7 & 826 & 8.6 & 412 & 6.5 & 476 \\ 
		scpc1 & (8.0) & 2127 & 43.8 & 2236 & 12.5 & 447 & 7.8 & 676 \\ 
		scpc2 & (6.0) & 2238 & 49.1 & 2364 & 18.9 & 1177 & 7.3 & 1116 \\ 
		scpc3 & (10.7) & 1887 & 81.8 & 14341 & 63.1 & 8151 & 22.5 & 7361 \\ 
		scpc4 & (7.9) & 1558 & 26.5 & 528 & 8.2 & 157 & 5.9 & 122 \\ 
		scpc5 & (8.6) & 2275 & 44.5 & 2707 & 16.2 & 1396 & 7.1 & 1049 \\ 
		scpd1 & (12.7) & 3049 & 24.2 & 1121 & 12.5 & 595 & 7.7 & 514 \\ 
		scpd2 & (13.2) & 2915 & 41.5 & 4893 & 11.3 & 317 & 8.0 & 270 \\ 
		scpd3 & (11.1) & 2531 & 33.6 & 2068 & 13.3 & 812 & 8.5 & 874 \\ 
		scpd4 & (14.2) & 2633 & 51.6 & 7123 & 18.5 & 1183 & 9.7 & 1406 \\ 
		scpd5 & (12.5) & 2877 & 41.3 & 2717 & 15.7 & 1063 & 8.5 & 730 \\ 
		scpe1 & (22.0) & 165023 & 4.0 & 6056 & 7.8 & 5758 & 6.2 & 6113 \\ 
		scpe2 & 628.0 & 6694 & 0.7 & 75 & 3.3 & 36 & 3.1 & 27 \\ 
		scpe3 & 6625.8 & 194620 & 0.7 & 158 & 3.4 & 78 & 3.1 & 93 \\ 
		scpe4 & 4854.2 & 162737 & 0.8 & 18 & 3.3 & 16 & 3.2 & 16 \\ 
		scpe5 & (21.4) & 175247 & 20.4 & 39241 & 11.7 & 9544 & 17.5 & 27729 \\ 
		scpnre1 & (25.5) & 1482 & 272.4 & 167386 & 165.8 & 53354 & 111.4 & 65132 \\ 
		scpnre2 & (25.6) & 1322 & 421.6 & 306212 & 256.4 & 92581 & 206.5 & 123987 \\ 
		scpnre3 & (20.7) & 2141 & 190.4 & 99597 & 115.6 & 26685 & 62.0 & 31303 \\ 
		scpnre4 & (26.5) & 1739 & 159.5 & 59464 & 107.0 & 22704 & 46.5 & 19717 \\ 
		scpnre5 & (20.9) & 1492 & 129.1 & 70924 & 81.1 & 15504 & 42.4 & 15901 \\ 
		scpnrf1 & (26.6) & 1703 & 201.3 & 78791 & 93.8 & 25706 & 52.5 & 24857 \\ 
		scpnrf2 & (22.9) & 2625 & 282.1 & 31782 & 54.4 & 12568 & 38.9 & 12146 \\ 
		scpnrf3 & (23.2) & 1591 & 123.9 & 24673 & 56.3 & 8625 & 33.6 & 7180 \\ 
		scpnrf4 & (29.4) & 2248 & 282.8 & 161728 & 185.8 & 75329 & 128.6 & 88579 \\ 
		scpnrf5 & (26.1) & 3425 & 991.6 & 969952 & 793.0 & 481697 & 600.5 & 480505 \\ 
	\end{longtable}
}

	\bibliography{BD_PSCP}

\end{document}